\newtheorem{proposition}{Proposition}[section]
\newtheorem{theorem}[proposition]{Theorem}
\newtheorem{lemma}[proposition]{Lemma}
\newtheorem{corollary}[proposition]{Corollary}
\theoremstyle{definition}
\newtheorem{remark}[proposition]{Remark}
\newtheorem{definition}[proposition]{Definition}
\newtheorem{example}[proposition]{Example}
\newtheorem{question}[proposition]{Question}
\title{Two results regarding the variation of K-moduli}
\author{Fei Si, Zheng Zhang, Chuyu Zhou}
\address{The School of Mathematics and Statistics, Xi’an Jiaotong University, 28 West Xianning Road, Xi’an, Shaanxi 710049, China}
\email{sifei@xjtu.edu.cn}
\address{Institute of Mathematical Sciences, ShanghaiTech University, 393 Middle Huaxia Road, Shanghai 201210, China}
\email{zhangzheng@shanghaitech.edu.cn}
\address{School of Mathematical Sciences, Xiamen University, Siming South Road 422, Xiamen, Fujian 361005, China}
\email{chuyuzhou1@gmail.com}
\date{} 
\thanks{2010 
	    \emph{Mathematics Subject Classification}: 14J45.
	    \newline
	    \indent 
		\emph{Keywords}: Log Fano pair, K-stability, K-semistable domain, K-moduli, Variation of GIT.
        \newline
		\indent
		\emph{Competing interests}: The authors declare none.
		}
\newcommand{\Fut}{{\rm{Fut}}}
\newcommand{\ord}{{\rm {ord}}}
\newcommand{\tc}{{\rm {tc}}}
\newcommand{\vol}{{\rm {vol}}}
\newcommand{\Proj}{{\rm{Proj}}}
\newcommand{\PGL}{{\rm {PGL}}}
\newcommand{\Aut}{{\rm {Aut}}}
\newcommand{\CM}{{\rm {CM}}}
\newcommand{\Kss}{{\rm {Kss}}}
\newcommand{\GIT}{{\rm {GIT}}}
\newcommand{\Diff}{{\rm {Diff}}}
\newcommand{\bA}{\mathbb{A}}
\newcommand{\bC}{\mathbb{C}}
\newcommand{\bN}{\mathbb{N}}
\newcommand{\bP}{\mathbb{P}}
\newcommand{\bQ}{\mathbb{Q}}
\newcommand{\bZ}{\mathbb{Z}}
\newcommand{\mB}{\mathcal{B}}
\newcommand{\mD}{\mathcal{D}}
\newcommand{\mE}{\mathcal{E}}
\newcommand{\mG}{\mathcal{G}}
\newcommand{\mL}{\mathcal{L}}
\newcommand{\mM}{\mathcal{M}}
\newcommand{\mO}{\mathcal{O}}
\newcommand{\mS}{\mathcal{S}}
\newcommand{\mU}{\mathcal{U}}
\newcommand{\mX}{\mathcal{X}}
\newcommand{\mY}{\mathcal{Y}}
\begin{document}

\begin{abstract}
In this note, we prove two results regarding the variation of K-moduli. The first one reveals the relationship between the chamber decomposition for K-semistable domains and the variation of GIT. The second one presents the relationship between the K-moduli generically parametrizing K-semistable smooth Fano complete intersections of the form $S_1\cap...\cap S_k$ and the K-moduli generically parametrizing K-semistable log Fano manifolds of the form $(\mathbb{P}^n, \sum_{j=1}^kx_jS_j)$, where $x_j\in (0,1)\cap \mathbb{Q}$ and $S_j\subset \mathbb{P}^n$ is a hypersurface of degree $d_j$ for each $1\leq j\leq k$.
\end{abstract}

\maketitle

\setcounter{tocdepth}{1}


\section{Introduction}

We work over the complex number field $\bC$ throughout the article.

Based on a rather complete algebraic K-stability theory developed in the past few years (e.g. \cite{Jiang20, BLX22, Xu20, ABHLX20, BX19, XZ20b, LXZ22}),  people are now able to  construct a projective scheme as a good moduli space to parametrize K-polystable Fano varieties. In particular, a wall crossing theory for K-moduli spaces is also established in \cite{ADL19, Zhou23}, which turns out to be useful in the study of birational geometry of different moduli spaces of algebraic varieties (e.g. \cite{ADL20, ADL21}). 
In the wall crossing picture, it is widely recognized that the K-moduli space before the first wall is related to the GIT moduli space (e.g. \cite{ADL19, GMGS21, Zhou21a}). For example, fixing a K-polystable Fano variety $X$ and a sufficiently divisible positive integer $l$, there is a natural $\Aut(X)$-action on the linear system $|-lK_X|$ (note that $\Aut(X)$ is reductive due to \cite{ABHLX20}). We conclude that $D\in |-lK_X|$ is GIT-(semi/poly)stable if and only if $(X, \frac{\epsilon}{l} D)$ is K-(semi/poly)stable for $0<\epsilon\ll 1$ (e.g. \cite[Theorem 1.1]{Zhou21a}, see also \cite{ADL19, GMGS21}). Recently, a wall crossing theory for K-moduli with multiple boundaries is established in \cite{Zhou23b}, therefore, it is also natural to determine the similar relationship (as in the above example) in the setting of variation of GIT.

To state the first result, we fix a K-polystable Fano variety $X$ and $k$ ample line bundles $L_1,...,L_k$ such that $L_j\sim_\bQ -l_jK_X$ for some rational $l_j>0$. Denote by $\bP_j:=|L_j|,\ j=1,...,k$. Then there is a natural $\Aut(X)$-action on $\bP_1\times...\times \bP_k$ and one could consider the variation of GIT under the linearization $\mO(\alpha_1,...,\alpha_k)$ as $(\alpha_1,...,\alpha_k)$ varies in $\bQ_+^k$ (e.g. \cite{Tha96}). On the other hand, by \cite{Zhou23b}, there is a finite rational polytope chamber decomposition of $[0,1]^k$
such that for any 
$$(D_1,...,D_k)\in \bP_1\times...\times \bP_k,$$ 
the K-semistability of $(X, \sum_{j=1}^kx_jD_j)$ does not change as $(x_1,...,x_k)$ varies in the interior domain of each chamber. We have the following result which reveals the relationship between the variation of GIT and the chamber decomposition of $[0,1]^k$.

\begin{theorem}\label{thm: main1}
Fix a rational vector $(c_1,...,c_k)\in (0,1)^k$. Suppose $(X, r\sum_{j=1}^kc_jB_j)$ is a K-semistable log Fano pair for some $(B_1,...,B_k)\in \bP_1\times...\times \bP_k$ and some rational $r>0$. Then $(D_1,...,D_k)\in \bP_1\times...\times \bP_k$ 
is GIT-(semi/poly)stable under $\Aut(X)$-action with respect to the linearization $\mO(c_1,...,c_k)$ if and only if $(X, \epsilon\sum_{j=1}^kc_jD_j)$ is K-(semi/poly)stable for $0<\epsilon\ll 1$. In other words, the chamber decomposition of $[0,1]^k$ near the original point $(0,0,...,0)\in [0,1]^k$ exactly gives the chamber decomposition for the variation of GIT.
\end{theorem}

The above result is a natural generalization of \cite[Theorem 1.1]{Zhou21a} and concerns the case where the coefficients are close to $0$. We now turn to the second result, which discusses the behavior near another extremal point of the K-semistable domain. Fix a positive integer $n$ and $k$ positive integers $\vec{d}:=(d_1,...,d_k)$. For log smooth hypersurfaces $S_j$'s of degrees $d_j$'s, if $\sum_{j=1}^k d_j< n+1$, then the complete intersection $\cap_{j=1}^kS_j$ gives a Fano manifold of dimension $n-k$. We denote $\mM^K_{n, \vec{d}}$ (resp. $M^K_{n, \vec{d}}$) to be the K-moduli stack (resp. K-moduli space) generically parametrizing such Fano manifolds which are K-semistable.\footnote{It is conjectured that the complete intersection $\cap_{j=1}^kS_j$ is K-semistable if $S_j, j=1,...,k,$ are general with $\sum_{j=1}^kd_j<n+1$. However, this conjecture is still open as far as we know. Up to this point, the stack $\mM^K_{n, \vec{d}}$ might be empty.} Let $\mM^K_{n, \vec{d}, \vec{x}}$ (resp. $M^K_{n, \vec{d}, \vec{x}}$) be the K-moduli stack (resp. K-moduli space) generically parametrizing K-semistable log Fano manifolds of the form  $(\bP^n, \sum_{j=1}^kx_jS_j)$, where $\vec{x}:=(x_1,...,x_k)$ and $x_j>0$ for each $j$. Then it is natural to ask what is the relationship
between $\mM^K_{n, \vec{d}}$ and $\mM^K_{n, \vec{d}, \vec{x}}$. First, recall the following result.

\begin{theorem}{\rm{(\cite[Theorem 8.1]{Zhou23a})}}\label{thm: domain}
Consider the log smooth pair $(\bP^n, S_1+S_2+...+S_k)$, where $S_j, j=1,...,k,$ are mutually distinct smooth hypersurfaces in $\bP^n$ of degrees $d_j$ with $n\geq 2$. Suppose all the Fano complete intersections are K-semistable\footnote{This means that for any non-repeating subset $\{i_1,...,i_l\}\subset \{1,2,...,k\}$ such that $\sum_{p=1}^ld_{i_p}<n+1$, the complete intersection $\cap_{p=1}^l S_{i_p}$ is K-semistable.}. Then the K-semistable domain\footnote{See Definition \ref{def: kss domain}.} $\Kss(\bP^n, S_1+...+S_k)$ is a rational polytope generated by the following equations
\begin{equation*}
\begin{cases}
0\leq x_i\leq 1, \ \ \  1\leq i\leq k\\

\beta_{\bP^n, \sum_{j=1}^kx_jS_j}(S_i)\geq 0, \ \ \  1\leq i\leq k\\

\sum_{j=1}^k x_jd_j\leq n+1.
\end{cases}
\end{equation*}
\end{theorem}

By the proof of \cite[Theorem 8.1]{Zhou23a}, if $\sum_{j=1}^kd_j< n+1$, the following equations 
$$\beta_{\bP^n, \sum_{j=1}^kx_jS_j}(S_i)= 0, \ \ \ 1\leq i\leq k$$ 
admit a unique solution, denoted by $\vec{a}:=(a_1,...,a_k)$, where
$$a_j=\frac{\sum_{i=1}^kd_i+(n-k+1)d_j-(n+1)}{(n-k+1)d_j}, $$
and $\vec{a}$ is automatically an extremal point of $\Kss(\bP^n, \sum_{j=1}^kS_{d_j})$. Given $\sum_{j=1}^k d_j<n+1$,  it is clear that $0\leq a_j<1$, and $a_j=0$ if and only if $d_1=d_2=...=d_k=1$. As an extremal point, $\vec{a}=(a_1,...,a_k)$ is definitely distinct from other points in $\Kss(\bP^n, \sum_{j=1}^kS_{d_j})$. We have the following second main result.

\begin{theorem}\label{thm: main2}
Let $\vec{a}=(a_1,...,a_k)$ be the unique solution as above. For a log smooth pair $(\bP^n, \sum_{j=1}^kS_j)$, where $S_j$'s are distinct hypersurfaces of degrees $d_j$'s 
with $\sum_{j=1}^kd_j<n+1$, we have the following conclusions:
\begin{enumerate}
\item the log Fano pair $(\bP^n, \sum_{j=1}^ka_jS_j)$ is K-semistable if and only if the complete intersection $\cap_{j=1}^{k}S_j$ is  K-semistable;
\item there exists a morphism $\mM^{K}_{n, \vec{d}, \vec{a}}\to \mM^K_{n, \vec{d}}$ 
which descends to a surjective morphism $M^{K}_{n, \vec{d}, \vec{a}}\to M^K_{n, \vec{d}}.$
\end{enumerate}
\end{theorem}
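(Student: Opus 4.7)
The strategy is to exploit the defining property of the coefficient vector $\vec{a}$, namely that $\beta_{(\bP^n,\, \sum a_j S_{d_j})}(S_{d_i}) = 0$ for every $i$, together with an iterated adjunction/degeneration construction around the complete intersection $Y := \bigcap_j S_{d_j}$. Intuitively, the vanishing of each Futaki-type invariant $\beta(S_{d_i})$ provides a product-type special test configuration for each $S_{d_i}$ at $\vec{a}$; these commute, coming from the natural multi-filtration of the section ring $\bigoplus H^0(\bP^n,\, m(-K - \sum a_j S_{d_j}))$ by orders of vanishing along the $S_{d_i}$, and combine into a single degeneration through which K-semistability can be transported in both directions.

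For a fixed log smooth pair, the first task is the numerical equivalence between K-semistability of $(\bP^n, \sum a_j S_{d_j})$ and K-semistability of $Y$. One natural route is iterated adjunction: restrict successively to $S_{d_1}$, then $S_{d_1} \cap S_{d_2}$, and so on, arriving at $Y$ polarized by $-K_Y$ (ample since $\sum d_j < n+1$, with no residual boundary). A direct computation using the explicit formula $a_j = \tfrac{\sum d_i + (n-k+1)d_j - (n+1)}{(n-k+1) d_j}$ should show that the effective coefficients together with the self-intersection contributions $(a_i - 1) S_{d_i}|_{S_{d_i}}$ at each stage combine so that the analogue of $\beta = 0$ persists, allowing a cone/adjunction K-semistability correspondence---which descends K-semistability across a $\beta$-zero divisor---to be applied at each step. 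A more conceptual presentation would work directly with the multigraded ring $\bigoplus_{m, \vec{\ell}} H^0(\bP^n,\, m(-K - \sum a_j S_{d_j}) - \sum \ell_j S_{d_j})$ and its associated graded, reducing the claim to the K-semistability of a cone-like central fiber that encodes $Y$.

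For the moduli morphism, the numerical equivalence applies in families by openness of K-semistability, so $(\bP^n, \sum a_j S_{d_j}) \mapsto \bigcap_j S_{d_j}$ defines a natural transformation between the groupoids of families parametrized by $\mM^K_{n,\vec{d},\vec{a}}$ and $\mM^K_{n,\vec{d}}$ respectively, yielding the desired morphism of stacks; by the universal property of good moduli spaces it descends to a morphism $M^{K}_{n,\vec{d},\vec{a}} \to M^K_{n,\vec{d}}$. Surjectivity on good moduli spaces is obtained via properness together with the numerical equivalence: for a K-polystable $Y_0 \in M^K_{n,\vec{d}}$ arising generically as $\bigcap_j S_{d_j}$ with smooth $S_{d_j}$, the equivalence directly produces a lift $(\bP^n, \sum a_j S_{d_j}) \in M^K_{n,\vec{d},\vec{a}}$; for an arbitrary K-polystable $Y_0$, approach it by a smoothing and take the K-polystable limit of the associated log pairs in the proper moduli space $M^K_{n,\vec{d},\vec{a}}$.

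\textbf{Main obstacle.} The delicate step will be the simultaneous $\beta$-zero descent for $k \geq 2$ divisors: showing that any destabilizing valuation $v$ of $(\bP^n, \sum a_j S_{d_j})$ must either be comparable to some $\ord_{S_{d_i}}$ (contradicting $\beta(S_{d_i}) = 0$) or descend, modulo the joint filtration, to a destabilizing valuation on $Y$. For $k = 1$ this reduces to the standard cone/adjunction argument; for $k > 1$, the main difficulty is controlling the interaction between the $k$ commuting filtrations and verifying that the iterated adjunction lands cleanly on $Y$ without spurious boundary from the self-intersection terms $(a_i - 1) S_{d_i}|_{S_{d_i}}$.
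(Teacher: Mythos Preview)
Your core strategy---iterated degeneration to cones using $\beta(S_{d_i})=0$ at each step---is exactly the paper's approach. However, two corrections are in order.

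First, the ``main obstacle'' you flag largely dissolves once you run the argument \emph{sequentially} rather than simultaneously. The paper never works with the $k$ commuting filtrations at once. Instead: since $\beta_{(\bP^n,\sum a_jS_{d_j})}(S_{d_1})=0$, the induced special test configuration has K-semistable central fiber, which is the projective cone over $(S_{d_1},\sum_{j\ge 2}a_jS_{d_j}|_{S_{d_1}})$ with $S_{d_1}$ as the infinity divisor carrying coefficient $a_1$. One then checks arithmetically that $a_1=1-\tfrac{r}{n}$ for the relevant index $r$, so Lemma~\ref{lem:cone stability} transfers K-semistability down to $(S_{d_1},\sum_{j\ge 2}a_jS_{d_j}|_{S_{d_1}})$. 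A direct computation shows $\beta(S_{d_2}|_{S_{d_1}})=0$ again on this new pair, and one iterates. There is no ``spurious boundary from $(a_i-1)S_{d_i}|_{S_{d_i}}$'': the cone lemma packages the self-intersection contribution entirely into the coefficient of the infinity divisor, and the remaining boundary on the base is just $\sum_{j\ge 2}a_jS_{d_j}|_{S_{d_1}}$. In the log smooth case the cone lemma is an equivalence, which gives the converse direction.

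Second, there is a genuine gap in your construction of the stack morphism. You write that ``$(\bP^n,\sum a_jS_{d_j})\mapsto \bigcap_j S_{d_j}$ defines a natural transformation'' and invoke openness of K-semistability, but $\mM^K_{n,\vec{d},\vec{a}}$ contains K-semistable \emph{degenerations} $(X,\sum_j a_jB_j)$ where $X$ need not be $\bP^n$ and the $B_j$ need not be Cartier; openness says nothing about how to assign an object of $\mM^K_{n,\vec{d}}$ to such a point. The paper handles this in Remark~\ref{rem: degeneration}: one first shows (as in Proposition~\ref{prop: good S}) that each $B_j$ is prime, then reruns the iterated cone argument using divisorial sheaves $\mO_X(mB_1)$ in place of line bundles, checking exactness via the $S_2$ property and Kawamata--Viehweg vanishing. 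This yields that $\cap_j B_j$ is K-semistable of dimension $n-k$, which is what actually defines the morphism on all of $\mM^K_{n,\vec{d},\vec{a}}$. The same extension is needed to close your surjectivity argument: after taking a K-polystable limit $(\tilde{\mX}_0,\sum a_j\tilde{\mS}_{d_j,0})$ you must still verify that $\cap_j\tilde{\mS}_{d_j,0}$ is K-semistable and hence S-equivalent to $Y_0$ by separatedness; this is exactly the degeneration case.
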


\begin{remark}
The above theorem can be generalized to the case where $\bP^n$ is replaced by a K-polystable Fano manifold and the boundaries are smooth divisors proportional to the anti-canonical divisor. The idea and the approach are similar. 
\end{remark}

After recalling the necessary preliminaries in Section \ref{section:preliminary}, we prove Theorem \ref{thm: main1} in Section \ref{sec: main1} and Theorem \ref{thm: main2} in Section \ref{sec: main2}. Finally, in Section \ref{sec: cubic-hyperplane}, we present a cubic-hyperplane example in $\bP^4$ to illustrate our main results. A closely related K-moduli space will be studied in detail in a forthcoming paper, in which the results of this note will play an important role.

\noindent
\subsection*{Acknowledgement}
F.S is partially supported by the Fundamental Research Funds for the Central Universities and Shaanxi NSF (No. 2025JC-QYCX-002). Z.Z is supported in part by the NSFC grant (No. 12201406). C.Z is supported by the NSFC grant (No. 12501058) and a grant from Xiamen University (No. X2450214).

\section{Preliminaries} \label{section:preliminary}

We say that $(X,\Delta)$ is a \emph{log pair} if $X$ is a normal projective variety and $\Delta$ is an effective $\bQ$-divisor on $X$ such that $K_X+\Delta$ is $\bQ$-Cartier.  The log pair $(X,\Delta)$ is called \emph{log Fano} if it admits klt singularities and $-(K_X+\Delta)$ is ample; if $\Delta=0$, we just say $X$ is a \emph{Fano variety}. 
For various types of singularities in birational geometry, e.g.  klt, lc, and plt singularities, we refer to \cite{KM98,Kollar13}.

\subsection{K-stability}

Let $(X,\Delta)$ be a log pair. Suppose $f\colon Y\to X$ is a proper birational morphism between normal varieties and $E$ is a prime divisor on $Y$, we say that $E$ is a prime divisor over $X$ and define the following invariant
$$A_{X,\Delta}(E):=1+\ord_E(K_Y-f^*(K_X+\Delta)). $$
It is called the \emph{log discrepancy} of $E$ associated to the log pair $(X,\Delta)$.
If $(X,\Delta)$ is a log Fano pair, we define the following invariant
$$S_{X,\Delta}(E):=\frac{1}{\vol(-K_X-\Delta)}\int_0^\infty \vol(-f^*(K_X+\Delta)-tE){\rm{d}}t .$$
Put $\beta_{X,\Delta}(E):=A_{X,\Delta}(E)-S_{X,\Delta}(E)$. By the works \cite{Fuj19, Li17}, one can define K-stability of a log Fano pair by the following criterion.
\begin{definition}\label{def: kss}
Let $(X,\Delta)$ be a log Fano pair. 
We say that $(X,\Delta)$ is \emph{K-semistable} if $\beta_{X,\Delta}(E)\geq 0$ for any prime divisor $E$ over $X$.
\end{definition}

The following lemma on K-stability of projective cones is well known, see e.g. \cite[Prop 2.11]{LZ22}.
\begin{lemma}\label{lem:cone stability}
Let $(V,\Delta)$ be an n-dimensional log Fano pair, and L an ample line bundle on V such that $L\sim_\bQ -\frac{1}{r}(K_V+\Delta)$ for some $0<r\leq n+1$. Suppose Y is the projective cone over V associated to L with infinite divisor $V_\infty$, then $(V,\Delta)$ is K-semistable if and only if $(Y,\Delta_Y+(1-\frac{r}{n+1})V_\infty)$ is K-semistable, where $\Delta_Y$ is the divisor on Y naturally extended by $\Delta$.
\end{lemma}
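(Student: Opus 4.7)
The plan is to invoke the valuative ($\beta$) criterion for K-semistability and exploit the natural $\bG_m$-action on the cone $Y$ to reduce to equivariant divisorial valuations. Write $a:=1-\frac{r}{n+1}\in[0,1)$, which lies in $[0,1)$ because $r\in(0,n+1]$. First I would verify that $(Y,\Delta_Y+aV_\infty)$ is indeed log Fano: resolving the cone vertex produces the $\bP^1$-bundle $P=\bP_V(\mO_V\oplus L)\to Y$ with the exceptional section $V_0\cong V$, and the cone formula $K_P+V_0+V_\infty\sim_\bQ \pi^*K_V$ together with $L\sim_\bQ -\frac{1}{r}(K_V+\Delta)$ gives the required klt singularities (using $a<1$ and klt-ness of $(V,\Delta)$) as well as the ampleness of $-K_Y-\Delta_Y-aV_\infty$.

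Since the cone $\bG_m$-action on $Y$ fixes the vertex $o$ and the infinity section $V_\infty$, equivariant K-semistability implies it suffices to check $\beta\geq 0$ on $\bG_m$-equivariant prime divisors over $Y$. Such divisors are essentially of two types: (i) $V_\infty$ itself, and (ii) the \emph{cylindrical lifts} $\tilde E$ of prime divisors $E$ over $V$, obtained by pulling $E$ back to the total space of $L^{-1}$ and completing along the cone direction. For the cylindrical case one computes directly that $A_{Y,\Delta_Y+aV_\infty}(\tilde E)=A_{V,\Delta}(E)$, and using the graded decomposition of the anticanonical ring
\begin{equation*}
R\bigl(Y,-K_Y-\Delta_Y-aV_\infty\bigr)\;\cong\;\bigoplus_{m\geq 0}\bigoplus_{i=0}^{Nm} H^0\!\left(V,iL\right)\,t^{Nm-i}
\end{equation*}
for a suitable integer $N$ coming from $r$, one shows $S_{Y,\Delta_Y+aV_\infty}(\tilde E)=A_{V,\Delta}(E)-\mu\bigl(A_{V,\Delta}(E)-S_{V,\Delta}(E)\bigr)+\ldots$ (the precise formula being a linear function of $S_{V,\Delta}(E)$) so that $\beta_Y(\tilde E)$ and $\beta_{V,\Delta}(E)$ have the same sign.

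Next, I would verify $\beta_Y(V_\infty)=0$ by a direct computation: since $V_\infty\sim_\bQ \frac{r}{n+1}\bigl(-K_Y-\Delta_Y-aV_\infty\bigr)$ on $Y$ (this is exactly the relation that forces the coefficient $a=1-\frac{r}{n+1}$), the hypothesis $r\leq n+1$ ensures that $V_\infty$ is an effective boundary with the correct coefficient to have zero discrepancy and matching $S$-invariant. Equivalently, $\ord_{V_\infty}$ is the canonical $\bG_m$-valuation generated by the cone action, which is well known to have vanishing $\Fut$.

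The main obstacle is handling the \emph{quasi-monomial} equivariant valuations that mix the cone direction with a valuation on $V$, rather than being either purely cylindrical or $\ord_{V_\infty}$. These arise as quasi-monomial combinations of $\tilde E$ and $\ord_{V_\infty}$ on the $\bP^1$-bundle $P$. One deals with them by checking that on the graded ring above, such a mixed valuation has $\beta$ equal to a convex combination of $\beta_Y(\tilde E)$ and $\beta_Y(V_\infty)=0$, hence nonnegative whenever $\beta_{V,\Delta}(E)\geq 0$. This convexity, together with the two computations above, gives both directions of the equivalence.
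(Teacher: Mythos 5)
First, a point of comparison: the paper does not prove this lemma at all --- it is quoted as a well-known result with a citation to \cite[Prop.\ 2.11]{LZ22} --- so your sketch can only be measured against the standard argument in the literature, whose architecture (equivariant reduction, classification of $\bG_m$-invariant valuations on the cone, explicit $\beta$-computations) you have correctly identified. The easy direction is fine: the cylindrical lift satisfies $A_{Y,\Delta_Y+aV_\infty}(\tilde E)=A_{V,\Delta}(E)$ and $S_{Y,\Delta_Y+aV_\infty}(\tilde E)=S_{V,\Delta}(E)$, so $\beta_Y(\tilde E)=\beta_{V,\Delta}(E)$ and $Y$ K-semistable implies $V$ K-semistable without any equivariance.

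For the hard direction, however, there is a genuine gap exactly where the real content of the lemma sits. Your list of $\bG_m$-invariant valuations is incomplete: besides $V_\infty$ and the cylindrical lifts you must handle the valuations centered \emph{at the vertex}, namely $\ord_{V_0}$ for the exceptional divisor of the vertex blow-up (for which one checks $\beta(V_0)=0$, again forced by $a=1-\frac{r}{n+1}$) and, crucially, the quasi-monomial combinations of $\ord_{V_0}$ with the $\tilde E$'s; these, not the combinations with $\ord_{V_\infty}$, are the essential ones in the local-to-global correspondence. More seriously, the claim that $\beta$ of a mixed valuation is a \emph{convex combination} of $\beta_Y(\tilde E)$ and $\beta_Y(V_\infty)$ is unjustified and false as stated: $A$ is affine along a quasi-monomial simplex, but $S$ is not, so $\beta$ is neither affine nor convex there in general. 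The published proofs get around this either by an explicit computation of $S$ for every $\bG_m$-invariant valuation using the weight decomposition of the section ring, or by passing to the normalized volume of the cone vertex and using convexity of $\widehat{\vol}$ along these rays (Li--Xu); one of these substitutes is needed, and it is precisely the step your sketch waves through. A minor slip: $-K_Y-\Delta_Y\sim_{\bQ}(1+r)V_\infty$, so $V_\infty\sim_{\bQ}\frac{n+1}{r(n+2)}\bigl(-K_Y-\Delta_Y-aV_\infty\bigr)$, not $\frac{r}{n+1}$ of it; the conclusion $\beta(V_\infty)=0$ is nonetheless correct.
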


\subsection{Test configuration}

\begin{definition}\label{def: tc}
Let $(X,\Delta)$ be a log pair and $L$ an ample $\bQ$-line bundle on $X$. A test configuration $\pi: (\mX,\Delta_\tc;\mL)\to \bA^1$ is a degenerating family over $\bA^1$ consisting of the following data:
\begin{enumerate}
\item $\pi: \mX\to \bA^1$ is a projective flat morphism from a normal variety $\mX$, $\Delta_\tc$ is an effective $\bQ$-divisor on $\mX$ which does not contain any component in the central fiber $\mX_0$, and $\mL$ is a relatively ample $\bQ$-line bundle on $\mX$,
\item the family $\pi$ admits a $\bC^*$-action which lifts the natural $\bC^*$-action on $\bA^1$ such that $(\mX,\Delta_\tc; \mL)\times_{\bA^1}\bC^*$ is $\bC^*$-equivariantly isomorphic to $(X, \Delta; L)\times_{\bA^1}\bC^*$.
\end{enumerate}
\end{definition}

Suppose $(X,\Delta)$ is a log Fano pair and $L=-K_X-\Delta$. Let  $(\mX,\Delta_\tc; \mL)$ be a test configuration such that $\mL=-K_{\mX/\bA^1}-\Delta_\tc$. We call it a special test configuration if  $(\mX, \mX_0+\Delta_{\tc})$ admits plt singularities (or equivalently, the central fiber $(\mX_0, \Delta_{\tc,0})$ is a log Fano pair).

\begin{remark}\label{rem: kss degeneration}
Let $(X, \Delta)$ be a K-semistable log Fano pair. Suppose $S$ is a prime divisor on $X$ such that $\beta_{X, \Delta}(S)=0$, then $S$ induces a special test configuration with vanishing generalized Futaki invariant (e.g. \cite{LX14, Fuj19}). By \cite{LWX21}, we know that the central fiber of the test configuration is also K-semistable. Assume further that $S$ is Cartier and $S\sim_\bQ -\lambda(K_X+\Delta)$ for some rational $0<\lambda<1$, then it is not hard to see that 
the central fiber of the test configuration is isomorphic to the projective cone over $S$ with respect to the polarization $S|_S$. Actually, first note that 
$$X=\Proj \bigoplus_{m\in \bN} H^0(X, mS). $$
By \cite[Section 2.3.1]{BX19},  the special test configuration induced by $S$ could be formulated as
$$\mX:=\Proj \bigoplus_{m\in \bN}\bigoplus_{i\in \bZ} H^0(X, mS-iS)t^{-i}, $$
and the central fiber is
$$\mX_0=\Proj \bigoplus_{m\in \bN}\bigoplus_{i\in \bZ} \left(H^0(X, mS-iS)/H^0(X, mS-(i+1)S)\right) .$$
Note that for $i\leq m$ we have
$$H^0(X, mS-iS)/H^0(X, mS-(i+1)S)\cong  H^0(S, (m-i)S|_S).$$
Thus, $\mX_0\cong \Proj \bigoplus_{m\in \bN}\bigoplus_{i\in \bN}H^0(S, mS|_S)s^i$, which is the projective cone over $S$ with respect to $S|_S$.
\end{remark}

\section{Variation of K-moduli and variation of GIT-moduli}\label{sec: main1}

In this section, we aim to prove Theorem \ref{thm: main1}.

\subsection{Chow-Mumford line bundle}

Let $\pi: (\mX,\mD;\mL)\to T$ be a flat family of projective normal varieties of dimension $n$ over a proper normal base $T$, where $\mD$ is an effective $\bQ$-divisor on $\mX$ whose components are all flat over $T$, and $\mL$ is a relatively ample $\bQ$-line bundle on $\mX$. By the work of Mumford-Knudsen(\cite{KM76}), there exist $\bQ$-line bundles $\lambda_i$ ($i=0,1,...,n+1$) and $\tilde{\lambda}_i$ ($i=0,1,...n$) on $T$ such that we have the following expansions for all sufficiently large $k\in \bN$:
$$\det \pi_*(\mL^k)= \lambda_{n+1}^{\binom{k}{n+1}}\otimes\lambda_n^{\binom{k}{n}}\otimes...\otimes\lambda_1^{\binom{k}{1}}\otimes\lambda_0,$$
$$\det \pi_*(\mL|_\mD^k)= \tilde{\lambda}_{n}^{\binom{k}{n}}\otimes\tilde{\lambda}_{n-1}^{\binom{k}{n-1}}\otimes...\otimes\tilde{\lambda}_0.$$
By Riemann-Roch formula, cf \cite[Appendix]{CP21}, we have
$$c_1(\pi_*\mL^k)=\frac{\pi_*(\mL^{n+1})}{(n+1)!}k^{n+1}+\frac{\pi_*(-K_{\mX/T}\mL^n)}{2n!}k^n+... ,$$
$$c_1({\pi}_*{\mL}|_\mD^k)=\frac{{\pi}_*(\mL^n\mD)}{n!}k^n+... .$$
Then it is not hard to see
$$\lambda_{n+1}=\pi_*(\mL^{n+1}),\  \lambda_n=\frac{n}{2}\pi_*(\mL^{n+1})+\frac{1}{2}\pi_*(-K_{\mX/T}\mL^n),\  
\tilde{\lambda}_n=\pi_*(\mL^n\mD). $$
By the flatness of $\pi$ and $\pi_\mD$, we write
$$h^0(\mX_t, k\mL_t)=a_0k^n+a_1k^{n-1}+o(k^{n-1})\quad \text{and} \quad  h^0(\mD_t, k{\mL_t}|_{\mD_t})=\tilde{a}_0k^{n-1}+o(k^{n-1}),$$
which do not depend on the choice of $t\in T$. Then we have
$$a_0=\frac{\mL_t^n}{n!},\  a_1=\frac{-K_{\mX_t}{\mL_t}^{n-1}}{2(n-1)!},\  \tilde{a}_0=\frac{\mL_t^{n-1}\mD_t}{(n-1)!}.$$

\begin{definition}\label{def: CM}
We define the CM-line bundles for the family $\pi: (\mX,\mD;\mL)\to T$ as follows:
$$\lambda_{\CM}(\mX,\mL;\pi):=\lambda_{n+1}^{\frac{2a_1}{a_0}+n(n+1)}\otimes\lambda_n^{-2(n+1)}, $$
$$\lambda_{\CM}(\mX,\mD,\mL;\pi):= \lambda_{n+1}^{\frac{2a_1-\tilde{a}_0}{a_0}+n(n+1)}\otimes\lambda_n^{-2(n+1)}\otimes\tilde{\lambda}_n^{n+1}.$$
For a rational number $0\leq \beta\leq 1$, we define the following
$$\lambda_{\CM, \beta}(\mX,\mD,\mL;\pi):= \lambda_{n+1}^{\frac{2a_1-\beta\tilde{a}_0}{a_0}+n(n+1)}\otimes\lambda_n^{-2(n+1)}\otimes\tilde{\lambda}_n^{\beta(n+1)}.$$
\end{definition}

\begin{remark}\label{rem: futaki}
If $\pi: (\mX, \mD; \mL)\to T$ is a test configuration of the general fiber $(\mX_t, \mD_t; \mL_t)$, then it is well known that the generalized Futaki invariant $\Fut(\mX, \beta\mD; \mL)$ is equal to $w(\lambda_{\CM, \beta}(\mX,\mD,\mL;\pi))$ up to a positive factor, where $w(\lambda_{\CM, \beta}(\mX,\mD,\mL;\pi))$ is the total weight of the $\bC^*$-action on $\lambda_{\CM, \beta}(\mX,\mD,\mL;\pi)$ (e.g. \cite[Theorem 2.6]{GMGS21}).
\end{remark}

\begin{lemma}\label{lem: proportional cm}
Let $\pi: (\mX, \mD; \mL)\to T$ be a family that satisfies:
\begin{enumerate}
\item $\mX\to T$ is a flat family of projective normal varieties of dimension $n$ over a proper normal base $T$ such that $-K_{\mX/T}$ is $\bQ$-Cartier and relative ample over $T$; 
\item $\mD$ is an effective $\bQ$-divisor on $\mX$ such that every component is flat over $T$ and $\mD\sim_{\bQ, T}-\mu K_{\mX/T}$ for some rational $\mu>0$;
\item $\mL=- K_{\mX/T}$.
\end{enumerate}
\end{lemma}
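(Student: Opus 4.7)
The plan is to specialize the Knudsen--Mumford identifications $\lambda_{n+1}=\pi_*(\mL^{n+1})$, $\lambda_n=\tfrac{n}{2}\pi_*(\mL^{n+1})+\tfrac{1}{2}\pi_*(-K_{\mX/T}\mL^n)$, $\tilde{\lambda}_n=\pi_*(\mL^n\mD)$ recorded just above the lemma using hypotheses (2) and (3), and to substitute the resulting simplifications into the definitions of $\lambda_{\CM}$ and $\lambda_{\CM,\beta}$ in order to conclude the expected proportionality
\[
\lambda_{\CM,\beta}(\mX,\mD,\mL;\pi)=(1-\mu\beta)\,\lambda_{\CM}(\mX,\mL;\pi)
\]
in $\Pic(T)_\bQ$ (which I take to be the intended content of the lemma, given its name).

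First, using $\mL=-K_{\mX/T}$, I would rewrite $\pi_*(-K_{\mX/T}\mL^n)=\pi_*(\mL^{n+1})=\lambda_{n+1}$, which gives $\lambda_n=\tfrac{n+1}{2}\lambda_{n+1}$ in $\Pic(T)_\bQ$; at the level of the general fiber this gives $a_1/a_0=n/2$, hence $2a_1/a_0+n(n+1)=n(n+2)$ and
\[
\lambda_{\CM}(\mX,\mL;\pi)=\lambda_{n+1}^{\,n(n+2)}\otimes\lambda_{n+1}^{-(n+1)^2}=\lambda_{n+1}^{-1}.
\]
Next, using $\mD\sim_{\bQ,T}\mu\mL$, I would deduce $\tilde{\lambda}_n=\mu\lambda_{n+1}$ in $\Pic(T)_\bQ$ and $\tilde{a}_0/a_0=\mu n$, so that $(2a_1-\beta\tilde{a}_0)/a_0+n(n+1)=n(n+2)-\beta\mu n$. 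Plugging these into the definition of $\lambda_{\CM,\beta}$ and collecting the exponent of $\lambda_{n+1}$ gives $n(n+2)-\beta\mu n-(n+1)^2+\mu\beta(n+1)=-(1-\mu\beta)$, which yields $\lambda_{\CM,\beta}=(1-\mu\beta)\lambda_{\CM}$ as claimed.

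The step I expect to be delicate is promoting the numerical relation $\tilde{\lambda}_n=\mu\lambda_{n+1}$ to an honest identity of determinant $\bQ$-line bundles on $T$: hypothesis (2) only gives an equality of divisors $\mD=\mu\mL+\pi^*N$ for some $\bQ$-Cartier $N$ on $T$, which a priori contributes a correction $(\mL_t^n)\cdot N$ to $\pi_*(\mL^n\mD)$. For the intended downstream application to generalized Futaki invariants via Remark~\ref{rem: futaki} this correction is inert (on $T=\bA^1$ we have $\Pic(T)_\bQ=0$, and in general pullbacks from $T$ carry trivial $\bC^*$-weight), so the proportionality can be stated either as an identity in $\Pic(T)_\bQ$ modulo pullback from $T$ or, after passing to $\bC^*$-weights, as the equality $\Fut(\mX,\beta\mD;\mL)=(1-\mu\beta)\Fut(\mX;\mL)$ up to the same positive factor. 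This equivariant reformulation is the version that will be invoked in the comparison with VGIT in Theorem~\ref{thm: main1}.
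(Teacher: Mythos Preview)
Your computation is correct, but you have guessed the wrong conclusion. Due to a misplaced \verb|\end{lemma}| in the source, the actual conclusion was hidden from you; it reads
\[
\lambda_{\CM,\beta}(\mX,\mD,\mL;\pi)=-(1+\beta\mu n)\,\pi_*\mL^{n+1}+\beta(n+1)\,\pi_*(\mL^n\mD),
\]
and the paper's proof is literally ``Apply Definition~\ref{def: CM}.'' This is exactly what you obtain if you stop your calculation one step earlier: after computing $2a_1/a_0=n$, $\tilde a_0/a_0=\mu n$, $\lambda_n=\tfrac{n+1}{2}\lambda_{n+1}$, and collecting, you get the exponent $n(n+2)-\beta\mu n-(n+1)^2=-(1+\beta\mu n)$ on $\lambda_{n+1}=\pi_*\mL^{n+1}$ and the coefficient $\beta(n+1)$ on $\tilde\lambda_n=\pi_*(\mL^n\mD)$, \emph{without} replacing $\tilde\lambda_n$ by $\mu\lambda_{n+1}$.

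The extra substitution you make, $\tilde\lambda_n=\mu\lambda_{n+1}$, is precisely the step that forces the delicate discussion of the pullback correction $\pi^*N$; the paper simply avoids it by leaving $\pi_*(\mL^n\mD)$ unexpanded. In the downstream application (Lemma~\ref{lem: compute cm}) one has $\mX=X\times T$, so $\pi_*\mL^{n+1}=0$ and only the second term survives; the ``proportionality'' referenced in the label is then between $\lambda_{\CM,\beta}(\vec c)$ and $\mO(c_1/l_1,\dots,c_k/l_k)$, not between $\lambda_{\CM,\beta}$ and $\lambda_{\CM}$. So your worry about $N$ is well taken for the statement you wrote down, but evaporates for the statement the paper actually proves.
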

Then we have the following formula
$$\lambda_{\CM,\beta}(\mX, \mD,\mL;\pi)=-(1+\beta \mu n)\pi_*\mL^{n+1}+\beta (n+1) \pi_*(\mL^n\mD).$$

\begin{proof}
Applying Definition \ref{def: CM}, we have
\begin{align*}
&\lambda_{\CM, \beta}(\mX,\mD,\mL;\pi) \\
=\ &\lambda_{n+1}^{\frac{2a_1-\beta\tilde{a}_0}{a_0}+n(n+1)}\otimes\lambda_n^{-2(n+1)}\otimes\tilde{\lambda}_n^{\beta(n+1)}\\
=\ & \left(\frac{}{}n(1-\beta \mu)+n(n+1)\right)\pi_*\mL^{n+1}\\
\ &-2(n+1)\left(\frac{n}{2}\pi_*(\mL^{n+1})+\frac{1}{2}\pi_*(-K_{\mX/T}\mL^n) \right)+\beta (n+1) \pi_*(\mL^n\mD)\\
=\ &-(1+\beta \mu n)\pi_*\mL^{n+1}+\beta (n+1) \pi_*(\mL^n\mD).
\end{align*}
The proof is complete.
\end{proof}

\subsection{Chamber decomposition for K-moduli and GIT-moduli}

In this subsection, we fix a K-polystable Fano variety $X$ of dimension $n$ with $(-K_X)^n=v$ and $k$ ample line bundles $L_1,...,L_k$ such that $L_j\sim_\bQ -l_jK_X$ for some rational $l_j>0$. Put 
$$\bP_j:=|L_j|,\ j=1,...,k.$$
Let $\mD_j\subset X\times \bP_j$ be the universal divisor with respect to the linear system $|L_j|$. Denote 
$$T:=\bP_1\times ... \times \bP_k\quad \text{and} \quad\mB_j:=\bP_1\times...\times\bP_{j-1}\times \mD_j\times \bP_{j+1}\times...\times\bP_k.$$
For a given rational point $\vec{c}:=(c_1,...,c_k)\in (0,1)^k$, our aim is to compute the Chow-Mumford line bundle 
$$\lambda_{\CM, \beta}(\vec{c}):=\lambda_{\CM,\beta}(X\times T, \sum_{j=1}^k c_j\mB_j, -K_{X\times T/T}; \pi_{\vec{c}})$$ 
for the family 
$\pi_{\vec{c}}: (X\times T, \sum_{j=1}^k c_j\mB_j)\to T$
for rational $0\leq \beta\leq 1$.

\begin{lemma}\label{lem: compute cm}
Notation as above, for a rational point $\vec{c}:=(c_1,...,c_k)\in (0,1)^k$, we have the following formula:
$$\lambda_{\CM, \beta}(\vec{c})= \mO(\beta (n+1)vc_1,...,\beta(n+1)vc_k).$$
\end{lemma}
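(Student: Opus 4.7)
\medskip

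\noindent\textbf{Proof proposal.} The plan is a direct application of Lemma \ref{lem: proportional cm}, followed by evaluating the two resulting pushforwards via the projection formula.

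First I would verify the hypotheses of Lemma \ref{lem: proportional cm}. The family $X\times T\to T$ is flat with normal fibers isomorphic to $X$; the line bundle $\mL=-K_{X\times T/T}=p_X^*(-K_X)$ is relatively ample and $\bQ$-Cartier; and every component of $\sum_j c_j\mB_j$ is flat over $T$ because each universal divisor $\mD_j\subset X\times \bP_j$ is flat over $\bP_j$. The nontrivial input is the relative proportionality: the class of $\mB_j$ in $\Pic(X\times T)_\bQ$ is $p_X^*(-K_X)+\tfrac{1}{l_j}p_T^*\mO_T(\vec{e}_j)$, where $\vec{e}_j$ is the $j$-th standard basis direction on $T$. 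Hence $\mB_j\sim_{\bQ,T}-K_{X\times T/T}$, and therefore $\sum_j c_j\mB_j\sim_{\bQ,T}\bigl(\sum_j c_j\bigr)\bigl(-K_{X\times T/T}\bigr)$, so the lemma applies with $\mu=\sum_j c_j$.

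Applying Lemma \ref{lem: proportional cm} yields
$$\lambda_{\CM,\beta}(\vec{c}) \,=\, -(1+\beta\mu n)\,\pi_*(\mL^{n+1}) \,+\, \beta(n+1)\sum_{j=1}^k c_j\, \pi_*(\mL^n\mB_j).$$
Next I would compute each pushforward separately. Since $\mL$ is pulled back from $X$ and $\dim X=n$, the class $\mL^{n+1}=p_X^*((-K_X)^{n+1})$ vanishes on $X\times T$, killing the first term. For the second, substitute the class decomposition $\mB_j=p_X^*(-K_X)+\tfrac{1}{l_j}p_T^*\mO_T(\vec{e}_j)$: the $p_X^*(-K_X)$ summand contributes zero for the same dimensional reason, and only the $T$-pullback part survives. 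The projection formula, together with $(-K_X)^n=v$, then gives $\pi_*(\mL^n\mB_j)=\tfrac{v}{l_j}\mO_T(\vec{e}_j)$. Summing with coefficients $\beta(n+1)c_j$ produces exactly $\mO\bigl(\tfrac{\beta(n+1)vc_1}{l_1},\ldots,\tfrac{\beta(n+1)vc_k}{l_k}\bigr)$, as claimed.

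There is no serious obstacle: once Lemma \ref{lem: proportional cm} is in hand, the remainder is bookkeeping. Two small points worth attention are (i) checking the relative $\bQ$-linear equivalence $\mB_j\sim_{\bQ,T}-K_{X\times T/T}$ so that the lemma applies with a single scalar $\mu$, and (ii) noticing that although $\mu=\sum_j c_j$ appears in the formula produced by the lemma, it only multiplies $\pi_*(\mL^{n+1})$, which vanishes, so the final answer is independent of $\mu$.
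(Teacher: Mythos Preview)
Your proposal is correct and follows essentially the same route as the paper: apply Lemma~\ref{lem: proportional cm}, observe that the $\pi_*\mL^{n+1}$ term vanishes (the paper leaves this implicit), and then compute $\pi_*(\mL^n\mB_j)=\mO_T(\tfrac{v}{l_j}\vec{e}_j)$. The only cosmetic difference is that the paper evaluates this last pushforward by factoring through the projection $X\times T\to X\times\bP_j\to\bP_j$, whereas you use the explicit class $\mB_j=p_X^*(-K_X)+\tfrac{1}{l_j}p_T^*\mO_T(\vec{e}_j)$ together with the projection formula; these are the same computation in slightly different language.
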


\begin{proof}
First, note that $\sum_{j=1}^k c_j\mB_j\sim_{\bQ, T} -(\sum_{j=1}^k c_jl_j)K_{X\times T/T}$.
By Lemma \ref{lem: proportional cm}, we have
\begin{align*}
\lambda_{\CM, \beta}(\vec{c})&=-(1+\beta n\sum_{j=1}^kc_jl_j){\pi_{\vec{c}}}_*(-K_{X\times T/T})^{n+1}+ (1+n)\beta{\pi_{\vec{c}}}_*\left((-K_{X\times T/T})^n.\sum_{j=1}^kc_j\mB_j\right)\\
&=\sum_{j=1}^k (n+1)\beta c_j {\pi_{\vec{c}}}_*((-K_{X\times T/T})^n\mB_j). 
\end{align*}
It suffices to compute ${\pi_{\vec{c}}}_*((-K_{X\times T/T})^n\mB_j)$. Recall that 
$$\mB_j:=\bP_1\times...\times\bP_{j-1}\times \mD_j\times \bP_{j+1}\times...\times\bP_k,$$ 
and 
${\pi_j}_*((-K_{X\times \bP_j/\bP_j})^n\mD_j)=\mO_{\bP_j}(v)$
for $\pi_{j}: X\times \bP_j\to \bP_j$. Thus, we have
$${\pi_{\vec{c}}}_*((-K_{X\times T/T})^n\mB_j)=\mO(0_1,0_2...0_{j-1}, v,0_{j+1},...,0_k),$$
where $0_i$ means that the $i$-th position is $0$. The proof is finished.
\end{proof}

It is not difficult to see that $\lambda_{\CM, \beta}(\vec{c})$ is proportional to $\mO(c_1,...,c_k)$.

\begin{proposition}\label{prop: GIT=K}
Fix a rational vector $(c_1,...,c_k)\in (0,1)^k$. Suppose $(X, r\sum_{j=1}^kc_jB_j)$ is a K-semistable log Fano pair for some $(B_1,...,B_k)\in \bP_1\times...\times \bP_k$ and some rational $r>0$. Then $(D_1,...,D_k)\in \bP_1\times...\times \bP_k$ 
is GIT-(semi/poly)stable under $\Aut(X)$-action with respect to the linearization $\mO(c_1,...,c_k)$ if and only if $(X, \epsilon\sum_{j=1}^kc_jD_j)$ is K-(semi/poly)stable for $0<\epsilon\ll 1$. 
\end{proposition}

\begin{proof}
The idea of the proof is essentially the same as that of \cite[Theorem 1.1]{Zhou21a}. 

Suppose $(X, \epsilon\sum_{j=1}^kc_jD_j)$ is K-semistable, to show that $(D_1,...,D_k)$ is GIT-semistable under the $\Aut(X)$-action with respect to the linearization $\mO(c_1,...,c_k)$, by Remark \ref{rem: futaki}, it suffices to show that $\lambda_{\CM, \epsilon}(\vec{c})$ is proportional to $\mO(c_1, ..., c_k)$. This is exactly Lemma \ref{lem: compute cm}. Now we suppose $(X, \epsilon\sum_{j=1}^kc_jD_j)$ is K-polystable, then $(D_1,...,D_k)$ is GIT-semistable as we have shown. Let $\rho: \bC^*\to \Aut(X)$ be any one parameter subgroup such that the limit point  $\lim_{t\to 0}\rho(t)\cdot (D_1,...,D_k)$ is GIT-polystable, then the GIT-weight corresponding to the $\bC^*$-action is zero, which implies that the Futaki invariant of the test configuration induced by the $\bC^*$-action vanishes. By \cite{LWX21}, we see that 
$$\lim_{t\to 0}\rho(t)\cdot (D_1,...,D_k)\cong (D_1,...,D_k).$$ 
Thus $(D_1,...,D_k)$ is GIT-polystable.

Conversely, suppose $(D_1,...,D_k)$ is GIT-semistable under $\Aut(X)$-action with respect to the linearization $\mO(c_1,...,c_k)$, we aim to show that $(X, \epsilon\sum_{j=1}^kc_jD_j)$ is K-semistable.
First, observe that there is a family $(\mX, \sum_{j=1}^k \mG_j)\to C$ over a smooth pointed curve $0\in C$ such that 
\begin{enumerate}
\item there is a morphism $C\setminus{0}\to \bP_j$ for each $j$;
\item $(\mX\setminus \mX_0,\mG_j\setminus \mG_{j,0})$ is obtained via pulling back $(X\times \bP_j,\mD_j)$ under the morphism $C\setminus \{0\}\to \bP_j$;
\item $(\mX_0, \sum_{j=1}^k\mG_{j,0})\cong (X, \sum_{j=1}^kD_j)$;
\item $(\mX_t, \epsilon\sum_{j=1}^kc_j\mG_{j,t})$
is K-semistable for any $t\in C\setminus \{0\}$.
\end{enumerate}
By the properness of K-moduli, up to a finite base change, one could replace the central fiber $(\mX_0, \epsilon\sum_{j=1}^kc_j\mG_{j,0})\cong (X, \epsilon\sum_{j=1}^kc_jD_j)$ with a K-semistable log Fano pair $(X', \epsilon\sum_{j=1}^kc_j D_j')$ and we denote the new family by $(\mX', \epsilon\sum_{j=1}^kc_j \mG_j')\to C$ for convenience. We claim that it suffices to show that $\mX'_0\cong X$. Suppose $\mX_0'\cong X$, then $(D_1',...,D_k')\in \bP_1\times ...\times \bP_k$ and it is GIT-semistable. By the separatedness of GIT moduli space, we know that $(D_1,...,D_k)$ and $(D_1',...,D_k')$ lie on the same $S$-equivalence class under $\Aut(X)$-action. Thus, $(X, \epsilon\sum_{j=1}^kc_jD_j)$ is K-semistable. 
Next, we turn to prove that $\mX_0'\cong X$. Since $\epsilon$ is sufficiently small, by Theorem \cite[Theorem 3.3]{Zhou21a}, we know that $\mX_0'$ is K-semistable. Note that $X$ is K-polystable, thus $\mX_0'\cong X$ by \cite[Theorem 3.4]{Zhou21a}. By now, we have finished the proof from GIT-semistability to K-semistability.
Finally, we suppose $(D_1,...,D_k)$ is GIT-polystable, then $(X, \epsilon\sum_{j=1}^kc_jD_j)$ is K-semistable as we have shown. To show that it is K-polystable, let $(X_0, \epsilon\sum_{j=1}^kc_jD_{j,0})$ be its K-polystable degeneration. By the same approach as above, we know that the ambient space of the K-polystable degeneration is preserved.
Thus 
$$(D_{1,0},...,D_{k,0})\in \bP_1\times...\times \bP_k$$ is GIT-polystable by the previous paragraph. By the separatedness of GIT-moduli space, we see $(D_1,...,D_k)\cong (D_{1,0},...,D_{k,0})$.
Therefore, $(X, \epsilon\sum_{j=1}^kc_jD_j)$ is itself K-polystable.
\end{proof}

Recall the following definition introduced in \cite{LZ23}.

\begin{definition} \label{def: kss domain}
Let $(D_1,...,D_k)\in \bP_1\times...\times \bP_k$, then the K-semistable domain of the log pair $(X, \sum_{j=1}^k D_j)$ is defined as follows:
$$\Kss(X, \sum_{j=1}^k D_j):=\overline{\{(x_1,...,x_k)\in [0,1]^k\cap \bQ^k\ |\ \textit{$(X, \sum_{j=1}^k x_jD_j)$ is K-semistable}\}}, $$
where the overline means taking the closure.
\end{definition}

By \cite[Theorem 1.4 and Corollary 1.5]{Zhou23b}, there exists a finite rational polytope chamber decomposition of $[0,1]^k$ such that for any $(D_1,...,D_k)\in \bP_1\times...\times \bP_k$, the K-semistability of $(X, \sum_{j=1}^kc_jD_j)$ does not change as $(c_1,...,c_k)$ varies in the interior domain of each chamber. We have

\begin{proposition}\label{prop: VGIT}
Near the original point of $[0,1]^k$, the above chamber decomposition exactly gives the chamber decomposition for the variation of GIT to encode the stability of GIT of $(D_1,...,D_k)\in \bP_1\times...\times \bP_k$ under the $\Aut(X)$-action with respect to the linearization $\mO(c_1,...,c_k)$.
\end{proposition}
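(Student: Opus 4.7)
The plan is to leverage Proposition \ref{prop: GIT=K} together with the scaling-invariance of GIT-stability under positive rescaling of the linearization. First, I would recall the structure of VGIT: for the $\Aut(X)$-action on $T = \bP_1\times\cdots\times \bP_k$, as the linearization $\mO(c_1/l_1,\ldots,c_k/l_k)$ varies with $c_j>0$, the theory of variation of GIT \cite{Tha96} yields a finite decomposition of the positive orthant into relatively open rational convex cones on the interior of each of which the GIT-(semi/poly)stability of every fixed point $(D_1,\ldots,D_k)\in T$ is constant. Since positive rescaling of a linearization preserves GIT-stability, this decomposition is automatically conical in $(c_1,\ldots,c_k)$, and so it may equivalently be viewed on any transverse slice of the positive orthant, or restricted to a small neighborhood of the origin in $\Delta^k$.

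Next I would translate this into the K-stability picture via Proposition \ref{prop: GIT=K}: for $\vec{c}\in (\Delta^k)^\circ$ close enough to the origin, the K-(semi/poly)stability of $(X,\sum_j c_j D_j)$ coincides with the GIT-(semi/poly)stability of $(D_1,\ldots,D_k)$ under $\Aut(X)$ with respect to $\mO(c_1/l_1,\ldots,c_k/l_k)$; the auxiliary small scalar $\epsilon$ appearing there is absorbed into the norm of $\vec{c}$ thanks to the scaling invariance just noted. In particular, K-(semi/poly)stability of $(X,\sum_j c_j D_j)$ near the origin depends only on the ray $\bR_{>0}\cdot\vec{c}$, and it partitions a neighborhood of the origin into the very same open cones as the VGIT chamber decomposition. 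Matching these cones with the finite chamber decomposition of $\Delta^k$ from \cite[Theorem 1.4 and Corollary 1.5]{Zhou23b}, and noting that both are determined by the maximal loci on which (K- or GIT-) stability type is constant, one concludes that the two decompositions agree on that neighborhood.

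The main technical point is to verify that the correspondence is valid not merely for semistability but simultaneously for polystability and stability; this is handled uniformly by Proposition \ref{prop: GIT=K}, so it essentially amounts to bookkeeping once that proposition is in hand. A minor issue is that ``close enough to the origin'' in Proposition \ref{prop: GIT=K} depends a priori on the point $(D_1,\ldots,D_k)$, but after passing to the finitely many chambers cut out by \cite[Theorem 1.4 and Corollary 1.5]{Zhou23b} one can choose a single uniform neighborhood of the origin on which the equivalence holds across all chambers, which is exactly what is needed for the matching of the two chamber structures.
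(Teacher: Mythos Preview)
Your proposal is correct and follows essentially the same approach as the paper: the paper's proof is a single sentence stating that the result is a combination of Proposition~\ref{prop: GIT=K} and \cite[Theorem 1.4 and Corollary 1.5]{Zhou23b}. Your write-up simply makes explicit the two ingredients that are implicit in that one-liner---the scaling invariance of GIT-stability (so the $\epsilon$ in Proposition~\ref{prop: GIT=K} is harmless and the VGIT chambers are conical) and the finiteness of the K-stability chambers (so a uniform neighborhood of the origin works for all $(D_1,\ldots,D_k)$)---neither of which the paper spells out.
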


\begin{proof}
The proof is a combination of Proposition \ref{prop: GIT=K} and \cite[Theorem 1.4 and Corollary 1.5]{Zhou23b}.
\end{proof}

\begin{proof}[Proof of Theorem \ref{thm: main1}]
The proof is a combination of Propositions \ref{prop: GIT=K} and \ref{prop: VGIT}.
\end{proof}

\section{K-moduli of Fano complete intersections}\label{sec: main2}

In this section, our goal is to prove Theorem \ref{thm: main2}. Throughout the section, we fix a positive integer $n$ and $k$ positive integers $\vec{d}:=(d_1,...,d_k)$ with $\sum_{j=1}^kd_j<n+1$. Denote by $\vec{a}:=(a_1,...,a_k)$ for
$$a_j=\frac{\sum_{i=1}^{k}d_i+(n-k+1)d_j-(n+1)}{(n-k+1)d_j}. $$
As noted in the introduction, we have $0\leq a_j<1$ for every $j$, and $a_j=0$ if and only if $d_1=d_2=...=d_k=1$. In the latter case, we easily see $a_1=a_2=...=a_k=0$ and $(\bP^n, \sum_{j=1}^ka_jS_j)=\bP^n$. We exclude this case in the following context since it is not interesting, therefore $0<a_j<1$ for every $j$.

\subsection{Inside $\bP^n$}
Let $\mM^K_{n, \vec{d}}$ (resp. $M^K_{n, \vec{d}}$) be the K-moduli stack (resp. K-moduli space) generically parametrizing $n-k$ dimensional Fano manifolds of the form $\cap_{j=1}^{k}S_j$ which are K-semistable, and $\mM^K_{n, \vec{d}, \vec{x}}$ (resp. $M^K_{n, \vec{d}, \vec{x}}$) the K-moduli stack (resp. K-moduli space) generically parametrizing K-semistable log Fano manifolds of the form  $(\bP^n, \sum_{j=1}^kx_jS_j)$, where $\vec{x}:=(x_1,...,x_k)$ and  $S_j$'s are hypersurfaces of degree $d_j$'s for $1\leq j\leq k$.

\begin{proposition}\label{prop: good S}
Suppose $(\bP^n, \sum_{j=1}^ka_jS_j)$ is a K-semistable log Fano pair (here $S_j$'s are not necessarily smooth), then every $S_j$ is irreducible and $S_j$'s are mutually distinct from each other.
\end{proposition}

\begin{proof}
By the choice of $a_j, j=1,...,k$, we know
$$1-a_i-S_{\bP^n, \sum_{j=1}^ka_jS_j}(S_i)=0 $$
for any $1\leq i\leq k$. We show that $S_1$ is irreducible. Suppose not, there is an irreducible component of $S_1$, denoted by $D$, satisfying
$$\beta_{\bP^n, \sum_{j=1}^ka_jS_j}(D)=1-a_1-S_{\bP^n, \sum_{j=1}^ka_jS_j}(D) <0.$$
The inequality holds since 
$$S_{\bP^n, \sum_{j=1}^ka_jS_j}(D)>S_{\bP^n, \sum_{j=1}^ka_jS_j}(S_1).$$
Thus, we get a contradiction to the K-semistability. Similarly, all other $S_j$'s are irreducible. Next we show that $S_j$'s are mutually different. Otherwise we may suppose $S_1=S_2$ up to reordering, then 
$$\beta_{\bP^n, \sum_{j=1}^ka_jS_j}(S_1)=1-(a_1+a_2)-S_{\bP^n, \sum_{j=1}^ka_jS_j}(S_1)<0, $$
which is again a contradiction to the K-semistability. The proof is finished.
\end{proof}

\begin{proposition}\label{prop: if}
Suppose $(\bP^n, \sum_{j=1}^ka_jS_j)$ is a K-semistable log Fano pair (here $S_j$'s are not necessarily smooth), then $\cap_{j=1}^k S_j$ is a K-semistable Fano variety of dimension $n-k$.
\end{proposition}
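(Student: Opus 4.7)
The plan is to prove K-semistability of $X_k:=\cap_{j=1}^k S_{d_j}$ by iterating $k$ special test configurations, one for each of the hypersurfaces $S_{d_j}$, and descending through each resulting projective cone via Lemma \ref{lem:cone stability}. The two essential inputs are (i) Remark \ref{rem: kss degeneration}, which produces from any divisor with vanishing $\beta$ a special test configuration whose central fibre is a K-semistable projective cone, and (ii) a numerical identity guaranteeing that the boundary coefficients produced by Lemma \ref{lem:cone stability} match the prescribed $a_j$.

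By construction of $\vec a$ we have $\beta_{\bP^n,\sum_j a_jS_{d_j}}(S_{d_1})=0$, and by Proposition \ref{prop: good S} the $S_{d_j}$ are prime and pairwise distinct. Since $S_{d_1}\sim_{\bQ}-\tfrac{d_1}{n+1}K_{\bP^n}$, Remark \ref{rem: kss degeneration} yields a special test configuration with K-semistable central fibre $(Y_1,\Delta_{\tc,0})$ whose underlying variety is $Y_1:=C(S_{d_1},\mO_{S_{d_1}}(d_1))$, the projective cone over $S_{d_1}$ with infinity divisor $V_1\cong S_{d_1}$. The component $a_1 S_{d_1}$ degenerates to $a_1 V_1$. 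For $j\ge 2$, because $S_{d_j}\ne S_{d_1}$ are distinct primes, the defining section $s_{d_j}\in H^0(\bP^n,\mO(d_j))$ satisfies $\ord_{S_{d_1}}(s_{d_j})=0$, so in the Rees-algebra description of $\mX_0$ its class in $\mathcal F^0/\mathcal F^1\cong H^0(S_{d_1},\mO_{S_{d_1}}(d_j))$ is $s_{d_j}|_{S_{d_1}}$; viewed inside the homogeneous coordinate ring of $Y_1$, this cuts out the cone $\tilde S_j$ over $S_{d_1}\cap S_{d_j}$. Consequently $\Delta_{\tc,0}=a_1 V_1+\sum_{j\ge 2}a_j\tilde S_j$.

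I now apply Lemma \ref{lem:cone stability} with $V=S_{d_1}$ of dimension $n-1$, $\Delta=\sum_{j\ge 2}a_j(S_{d_1}\cap S_{d_j})$, and $L=\mO_{S_{d_1}}(d_1)$. A direct volume computation gives $r_1=(n+1-d_1-\sum_{j\ge 2}a_jd_j)/d_1$, and using the identity $n+1-\sum_j a_jd_j=(n+1)(n+1-\sum_j d_j)/(n-k+1)$ one obtains $r_1=n(n+1-\sum_j d_j)/((n-k+1)d_1)$, from which
$$1-\frac{r_1}{n}=a_1$$
is immediate. Lemma \ref{lem:cone stability} therefore yields K-semistability of $(S_{d_1},\sum_{j\ge 2}a_j(S_{d_1}\cap S_{d_j}))$. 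Rewriting $a_j=1-(n+1-\sum_j d_j)/((n-k+1)d_j)$ makes the self-similarity under the reduction $(n,k,\vec d)\mapsto(n-1,k-1,(d_2,\dots,d_k))$ manifest: both $n+1-\sum_j d_j$ and $n-k+1$ are unchanged, so the same formula produces $\beta=0$ for each $S_{d_1}\cap S_{d_j}$ in the new setting, and the whole argument iterates. After $k$ steps the boundary is exhausted and $X_k$ is K-semistable; it is Fano of dimension $n-k$ because $\sum_j d_j<n+1$.

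The main obstacle I anticipate is the rigorous identification of the central-fibre limit of $S_{d_j}$ ($j\ge 2$) inside $Y_1$ as precisely the cone $\tilde S_j$ over $S_{d_1}\cap S_{d_j}$: the Rees-algebra computation must be carried out carefully to handle situations in which $S_{d_j}$ is not in generic position with respect to $S_{d_1}$. A secondary obstacle is checking that the klt and normality conditions required to invoke Remark \ref{rem: kss degeneration} and Lemma \ref{lem:cone stability} persist through every iteration; these should follow from K-semistability at each stage, but careful bookkeeping through the passage from cone to base is needed.
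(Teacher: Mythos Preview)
Your proposal is correct and follows essentially the same route as the paper: both iterate the construction of a special test configuration from a divisor with $\beta=0$ (Remark \ref{rem: kss degeneration}), identify the central fibre as a projective cone, pass to the base via Lemma \ref{lem:cone stability}, verify the coefficient identity $a_j=1-\tfrac{r}{n-j+1}$, and repeat after re-establishing irreducibility and distinctness of the restricted divisors as in Proposition \ref{prop: good S}. Your explicit identification of the limits $\tilde S_j$ via the Rees-algebra grading and your flagging of the normality and Cartier bookkeeping are points the paper leaves implicit here (and treats more carefully only in Remark \ref{rem: degeneration}); otherwise the arguments coincide.
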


\begin{proof}
We divide the proof into several steps.

\

\textit{Step 1.}
Since $\beta_{\bP^n, \sum_{j=1}^ka_jS_j}(S_1)=0$, by Remark \ref{rem: kss degeneration}, one could produce a special test configuration $(\mX, \sum_{j=1}^k a_j\mG_j)\to \bA^1$ of $(\bP^n, \sum_{j=1}^{k}a_j S_j)$ such that the central fiber is the projective cone over $(S_1, \sum_{j=2}^ka_j{S_j}|_{S_1})$ with the polarization $\mO_{S_1}(d_1)$, and
$$\Fut(\mX, \sum_{j=1}^k a_j\mG_j; -K_{\mX/\bA^1})=0. $$
Thus, the central fiber $(\mX_0, \sum_{j=1}^k a_j\mG_{j,0})$ is also K-semistable. Note that we have
$$\mO_{S_1}(d_1)\sim_\bQ -\frac{1}{r}(K_{S_1}+\sum_{j=2}^ka_j{S_j}|_{S_1})$$
for
$$a_1=1-\frac{r}{n},\ \ \ r= \frac{n+1-d_1-\sum_{j=2}^ka_jd_j}{d_1}.$$ 
By Lemma \ref{lem:cone stability},  we see that $(S_1, \sum_{j=2}^ka_j{S_j}|_{S_1})$ is K-semistable. 

\

\textit{Step 2.} 
Applying induction, we assume 
$$(X_l, \sum_{j=l+1}^ka_jE^{(l)}_{j}):=(S_1\cap S_2\cap...\cap S_l, \sum_{j=l+1}^ka_j S_j\cap S_1\cap S_2\cap...\cap S_l)$$
is K-semistable. We aim to show that
$$(X_{l+1}, \sum_{j=l+2}^ka_jE^{(l+1)}_{j}):=(S_1\cap S_2\cap...\cap S_{l+1}, \sum_{j=l+2}^ka_j S_j\cap S_1\cap S_2\cap...\cap S_{l+1}) $$
is also K-semistable. First, observe that $(a_{l+1}, a_{l+2}, ...,a_k)$ is the unique solution of the following linear equations:
$$\beta_{X_l, \sum_{j=l+1}^kx_jE_j^{(l)}}(E_i^{(l)})=0,\ \ \ i=l+1,...,k. $$
By a similar proof of Proposition \ref{prop: good S}, one concludes that $E_j^{(l)}, j=l+1,...,k,$ are different from each other and every $E_j^{(l)}$ is irreducible. Note that
$$\beta_{X_l, \sum_{j=l+1}^ka_jE_j^{(l)}}(E_{l+1}^{(l)})=0, $$
thus similar to Step 1, there exists a special test configuration $(\mX', \sum_{j=l+1}^k a_j\mG'_j)\to \bA^1$ of $(X_l, \sum_{j=l+1}^ka_jE_j^{(l)})$ such that the central fiber is the projective cone over 
$$\left( X_{l+1}=E_{l+1}^{(l)}, \sum_{j=l+2}^ka_jE_j^{(l+1)}\right)=(S_1\cap S_2\cap...\cap S_{l+1}, \sum_{j=l+2}^ka_j S_j\cap S_1\cap S_2\cap...\cap S_{l+1})$$
with polarization $\mO_{X_{l+1}}(d_{l+1})$, and 
$$\Fut(\mX', \sum_{j=l+1}^ka_j\mG_j'; -K_{\mX'/\bA^1})=0. $$
Hence, the central fiber $(\mX'_0, \sum_{j=l+1}^k a_j\mG'_{j,0})$ is also K-semistable. Note that we have
$$\mO_{S_{l+1}\cap S_l\cap...\cap S_1}(d_{l+1})\sim_\bQ -\frac{1}{r}(K_{S_{l+1}\cap S_l\cap...\cap S_1}+\sum_{j=l+2}^ka_jS_j\cap S_{l+1}\cap S_l\cap...\cap S_1)$$
for
$$a_{l+1}=1-\frac{r}{n-l},\ \ \ r= \frac{n+1-\sum_{j=1}^{l+1}d_j-\sum_{j=l+2}^ka_jd_j}{d_{l+1}}.$$ 
By Lemma \ref{lem:cone stability}, we see that the following log pair is K-semistable:
$$(S_1\cap S_2\cap...\cap S_{l+1}, \sum_{j=l+2}^ka_j S_j\cap S_1\cap S_2\cap...\cap S_{l+1}).$$

\

\textit{Step 3.} 
By Step 2, we see that the following log pair 
$$(S_{k-1}\cap...\cap S_1, a_kS_k\cap S_{k-1}\cap...\cap S_1)$$ 
is K-semistable and $S_{d_k}\cap S_{k-1}\cap...\cap S_1$ is irreducible. Since 
$$\beta_{S_{k-1}\cap...\cap S_1, a_kS_k\cap S_{k-1}\cap...\cap S_1}(S_k\cap S_{k-1}\cap...\cap S_1)=0,$$
there exists a special test configuration $(\mX'', a_k\mG_k'')\to \bA^1$ of 
$$(S_{k-1}\cap...\cap S_1, a_kS_k\cap S_{k-1}\cap...\cap S_1)$$ 
such that the central fiber is K-semistable and is the projective cone over $S_1\cap ...\cap S_k$ with polarization $\mO_{S_1\cap ...\cap S_k}(d_k)$. Moreover, we have
$$\mO_{S_k\cap...\cap S_1}(d_k)\sim_\bQ -\frac{1}{r}K_{S_k\cap...\cap S_1}$$
for
$$a_k=1-\frac{r}{n-k+1},\ \ \ r= \frac{n+1-\sum_{j=1}^{k}d_j}{d_k}.$$ 
Hence $S_1\cap ...\cap S_k$ is K-semistable by Lemma \ref{lem:cone stability}. The proof is finished.
\end{proof}

\begin{proposition}\label{prop: only if}
Given a log smooth pair $(\bP^n, \sum_{j=1}^kS_j)$, where $S_j$'s are distinct smooth hypersurfaces of degrees $d_j$ with $\sum_{j=1}^k d_j<n+1$. If $\cap_{j=1}^k S_j$ is a K-semistable  Fano variety of dimension $n-k$, then $(\bP^n, \sum_{j=1}^ka_jS_j)$ is a K-semistable log Fano pair.
\end{proposition}
\begin{proof}
In the log smooth setting, the proof of Proposition \ref{prop: if} can be reversed by Lemma \ref{lem:cone stability}. To see this, first note that 
$$(S_{k-1}\cap...\cap S_1, a_kS_k\cap S_{k-1}\cap...\cap S_1)$$
can be degenerated to a projective cone over $S_k\cap S_{k-1}\cap...\cap S_1$, denoted by $(\mX''_0, a_k\mG''_{k,0})$ (see the notation in Step 3 of the proof of Prop \ref{prop: if}). Since $(\mX''_0, a_k\mG''_{k,0})$ is K-semistable, we see 
$$(S_{k-1}\cap...\cap S_1, a_kS_k\cap S_{k-1}\cap...\cap S_1)$$
is also a K-semistable log Fano pair. Continuing to reverse the proof of Proposition \ref{prop: if}, we see
$$(S_1\cap S_2\cap...\cap S_l, \sum_{j=l+1}^ka_j S_j\cap S_1\cap S_2\cap ...\cap S_l)$$
is a K-semistable log Fano pair for any $1\leq l\leq k-1$. Finally, reversing Step 1 of the proof of Prop \ref{prop: if},  we see that $(\bP^n, \sum_{j=1}^ka_jS_j)$ can be degenerated to a projective cone over $(S_1, \sum_{j=1}^ka_jS_j\cap S_1)$, denoted by $(\mX_0, \sum_{j=1}^k a_j\mG_{j,0})$. Since $(\mX_0, \sum_{j=1}^k a_j\mG_{j,0})$ is a K-semistable log Fano pair, it is deduced that $(\bP^n, \sum_{j=1}^ka_jS_j)$ is itself a K-semistable log Fano pair.
\end{proof}

\begin{corollary}\label{cor: iff}
Given a log smooth pair $(\bP^n, \sum_{j=1}^kS_j)$, where $S_j$'s are distinct smooth hypersurfaces of degrees $d_j$ with $\sum_{j=1}^k d_j<n+1$. Then the following statements are equivalent:
\begin{enumerate}
\item $(\bP^n, \sum_{j=1}^ka_jS_j)$ is a K-semistable log Fano pair;
\item $S_1\cap S_2\cap...\cap S_k$ is a K-semistable Fano variety of dimension $n-k$;
\item $(S_1\cap S_2\cap...\cap S_l, \sum_{j=l+1}^ka_j S_j\cap S_1\cap S_2\cap ...\cap S_l)$ is a K-semistable log Fano pair for any $1\leq l\leq k$ (the log pair is identical to $S_1\cap S_2\cap...\cap S_k$ for $l=k$). 
\end{enumerate}
\end{corollary}

\begin{proof}
We can clearly see these equivalences from the proofs of Proposition \ref{prop: if} and Proposition \ref{prop: only if}.
\end{proof}

\begin{remark}\label{rem: kss to fano}
In Corollary \ref{cor: iff}, we put the log smooth condition to get these equivalent statements. However, in Proposition \ref{prop: if}, we do not need the log smooth assumption, and we see that $(\bP^n, \sum_{j=1}^ka_jS_j)$ being a K-semistable log Fano pair is a very strong condition, since it implies that $S_1, S_1\cap S_2,..., S_1\cap S_2\cap...\cap S_k$ are all Fano varieties; moreover, up to reordering $S_1,...,S_k$, we actually derive from the proof of Proposition \ref{prop: if} that the intersection $S_{i_1}\cap...\cap S_{i_p}$ is a Fano variety for any non-repeating subset $\{i_1,...,i_p\}\subset \{1,...,k\}$.
\end{remark}

\subsection{Outside $\bP^n$}
To study the K-moduli compactification, one has to consider the degeneration of $\bP^n$, which means that the ambient space can change.  We start with the following definition.

\begin{definition}\label{def: deg}
We say that the log pair $(X, \sum_{j=1}^kB_j)$ is a degeneration of log smooth pairs of the form $(\bP^n, \sum_{j=1}^kS_j)$ if there exists a $\bQ$-Gorenstein flat family over a smooth pointed curve $0\in C$, denoted by $(\mX, \sum_{j=1}^k\mB_j)\to C$, such that 
\begin{enumerate}
\item $\mX\to C$ is a flat family such that $-K_{\mX/C}$ is $\bQ$-Cartier and relatively ample;
\item each $\mB_j$ is a Weil divisor on $\mX$ and $\mB_j$ is proportional to $-K_{\mX/C}$ over $C$;
\item $(\mX_0, \sum_{j=1}^k\mB_{j,0})\cong (X, \sum_{j=1}^kB_j)$;
\item $(\mX_{t}, \sum_{j=1}^k\mB_{j, t})$ is of the form $(\bP^n, \sum_{j=1}^kS_j)$ for $t\in C\setminus \{0\}$, where $(\bP^n, \sum_{j=1}^kS_j)$ is log smooth and $S_j$'s are mutually distinct smooth hypersurfaces of degrees $d_j$'s.
\end{enumerate}
We say $(X, \sum_{j=1}^ka_jB_j)$ is a K-semistable degeneration if $(X, \sum_{j=1}^kB_j)$ is a degeneration as above and $(X, \sum_{j=1}^ka_jB_j)$ is a K-semistable log Fano pair. 

\end{definition}

\begin{lemma}\label{lem: smoothable deg}
Let $(\mX, \mB)\to C$ be a projective family of pairs over a smooth pointed curve $0\in C$ such that the family over $C\setminus 0$ is a log smooth, where $\mB$ is an effective prime divisor on $\mX$. Suppose (a) the degeneration $\mX_0$ has klt singularities;  (b) $-K_{\mX/C}$ is ample over $C$; (c) $\mB \sim_{\bQ} -rK_{\mX/C}$ for some rational $0<r<1$; (d) $\mB_0:=\mB|_{\mX_0}$ is normal. Then we have
\begin{enumerate}
\item $\mX\to C$ is a $\bQ$-Gorenstein family of Fano varieties;
\item if $\mB_0$ has klt type singularities, then $\mB\to C$ is a $\bQ$-Gorenstein family of Fano varieties;
\item $(K_{\mX_0}+\mB_0)|_{\mB_0}=K_{\mB_0}$, i.e. there is no different part.
\end{enumerate}
\end{lemma}

\begin{proof}
Note that the family $(\mX, \mB)\to C$ is log smooth over $C\setminus 0$, we easily see that $\mX\to C$ is a $\bQ$-Gorenstein family of Fano varieties by conditions (a) and (b). Since $\mX$ is smooth in codim 2, we have the following adjunction formulas
$$ {\rm (\star)}\ \ (K_{\mX}+\mB)|_{\mB}=K_{\mB}, \ \ \ {\rm (\star\star)}\ \  (K_\mB+\mB_0)|_{\mB_0}=K_{\mB_0}. $$
By condition (c), we see that $-K_\mB$ is ample over $C$. Combining condition (d), we see that $\mB\to C$ is also a $\bQ$-Gorenstein family of Fano varieties if $\mB_0$ has klt type singularities, which concludes (2). By adjunction, we have
$$(K_{\mX_0}+\mB_0)|_{\mB_0}=K_{\mB_0}+\Diff_{\mB_0}(0). $$
We aim to show that $\Diff_{\mB_0}(0)=0$ which concludes (3). To see this, first note that $\Diff_{\mB_0}(0)\geq 0$ (e.g. \cite[Proposition 4.5]{Kollar13}). Observe the following:
$$-(K_{\mX_0}+\mB_0)=-(K_\mX+\mX_0+\mB)|_{\mX_0}\sim_{\bQ, C} -(1-r)K_{\mX_0}, $$
which implies the following:
$$-(K_{\mB_0}+\Diff_{\mB_0}(0))\sim_\bQ -(1-r)K_{\mX_0}|_{\mB_0}.$$
On the other hand, by formulas $(\star)$ and $(\star\star)$, we have
$$-K_{\mB_0}=-(K_{\mB}+\mB_0)|_{\mB_0}=-(K_\mX+\mB)|_{\mB_0}\sim_\bQ -(1-r)K_{\mX_0}|_{\mB_0}. $$
Thus, we derive $-(K_{\mB_0}+\Diff_{\mB_0}(0))\sim_\bQ -K_{\mB_0}$ and it is clear that $\Diff_{\mB_0}(0)=0$.
\end{proof}

\begin{proposition}\label{prop: deg}
Notation as in Definition \ref{def: deg}, let $(X, \sum_{j=1}^ka_jB_j)$ be a K-semistable degeneration. Denoting by $B_0:=X$ and $B^{(l)}:=B_0\cap B_1\cap...\cap B_l$, then we have the following conclusions:
\begin{enumerate}
\item $B^{(l)}$ is a Fano variety for any $0\leq l\leq k$;
\item $K_{B^{(l)}}+B^{(l+1)}=K_{B^{(l+1)}}$ for any $0\leq l\leq k-1$, i.e. there is no different part;
\item $B^{(k)}=B_1\cap B_2\cap...\cap B_k$ is a K-semistable Fano variety. 
\end{enumerate}
\end{proposition}

\begin{proof}
Let $(\mX, \sum_{j=1}^ka_j\mB_j)\to C$ be the degeneration family over a pointed curve $0\in C$ as in Definition \ref{def: deg}, for which we have $(\mX_0, \sum_{j=1}^ka_j\mB_{j,0})\cong (X, \sum_{j=1}^ka_jB_j)$. Note that the degenerating family is log smooth over $C\setminus 0$, and each $\mB_j$ is a prime divisor on $\mX$ satisfying $\mB_j\sim_{\bQ, C}-r_jK_{\mX/C}$ for $r_j=\frac{d_j}{n+1}<1$.

\

\textit{Step 1.} 
Since  $(X, \sum_{j=1}^k a_j B_j)$ is K-semistable, then the Proposition \ref{prop: good S} stated for $(\bP^n, \sum_{j=1}^ka_jS_j)$ also applies to $(X, \sum_{j=1}^ka_jB_j)$. Indeed, we observe that $(a_1,...,a_k)$ is still the unique solution of the following linear equations
$$1-a_i-S_{X, \sum_{j=1}^ka_jB_j}(B_i)=0,\ \ \ i=1,2,...,k, $$
since we still have the following for a general fiber $(\bP^n, \sum_{j=1}^ka_jS_j)$:
$$S_{X, \sum_{j=1}^ka_jB_j}(B_i)=S_{\bP^n, \sum_{j=1}^ka_jS_j}(S_i).$$
Thus it is similar for $(X, \sum_{j=1}^kB_j)$ to derive that $B_j$'s are irreducible and mutually different. Since $\beta_{X, \sum_{j=1}^ka_jB_j}(B_1)=0$, by \cite{BLZ22}, we know that $B_1$ induces a special test configuration of $(X, \sum_{j=1}^ka_jB_j)$, denoted by
$$(\mX', \sum_{j=1}^ka_j\mB'_j)\to \bA^1,$$ 
where the central fiber is also a K-semistable log Fano pair and $(\mX'_0, \sum_{j=1}^ka_j\mB'_{j,0})$ is the projective orbifold cone over $B_1$. Hence $B_1$ is a klt type divisor on $X$ (e.g. \cite[Section 3.1]{Kollar13}). Applying Lemma \ref{lem: smoothable deg} to $(\mX, \mB_1)\to C$, we see that $B_1\cong \mB_{1,0}$ is a Fano variety, and $(K_X+B_1)|_{B_1}=K_{B_1}$. Since there is no different part, we see that $X$ is smooth along the codimension two points of $B_1$, which implies that $B_j|_{B_1}=B_j\cap B_1$ are all prime divisors on $B_1$ for $2\leq j\leq k$. By the orbifold cone construction (e.g. \cite[Section 4]{Kollar04seifert} or \cite[section 3.1]{Zhuang24}), we see that $(\mX'_0, \sum_{j=1}^ka_j\mB'_{j,0})$ is the projective cone over $(B_1, \sum_{j=2}^k a_j B_j\cap B_1)$. By Lemma \ref{lem:cone stability}, we see that $(B_1, \sum_{j=2}^ka_j B_j\cap B_1)$ is a K-semistable log Fano pair.

\

\textit{Step 2.} 
Now, we consider the degenerating family $(\mB_1, \sum_{j=2}^k a_j \mB_j\cap \mB_1)\to C$, which is also a family of K-semistable log Fano pairs. Similarly as in Step 1 (see Step 2 of the proof of Proposition \ref{prop: if}), we see that $\mB_{j,0}\cap \mB_{1,0}=B_j\cap B_1$ are mutually different prime divisors on $B_1$ for $2\leq j\leq k$, and $B_2\cap B_1$ induces a special test configuration of $(B_1, \sum_{j=2}^ka_j B_j\cap B_1)$ with vanishing generalized Futaki invariant. Using the same argument as in Step 1, we conclude that $B_2\cap B_1$ is a Fano variety and $K_{B_1}+B_2\cap B_1=K_{B_2\cap B_1}$. Continuing the process, we clearly complete the proofs of (1) and (2). Applying Lemma \ref{lem:cone stability} in the final step, we conclude (3).  
\end{proof}

\begin{remark}\label{rem: kss to fano'}
In Proposition \ref{prop: deg}, similar to Remark \ref{rem: kss to fano}, we actually derive that the intersection $B_{i_1}\cap...\cap B_{i_p}$ is a Fano variety for any non-repeating subset $\{i_1,...,i_p\}\subset \{1,...,k\}$.
\end{remark}

\subsection{Connecting map}

Recall that $\mM^K_{n, \vec{d}}$ (resp. $M^K_{n, \vec{d}}$) is the K-moduli stack (resp. K-moduli space) generically parametrizing $n-k$ dimensional Fano manifolds of the form $\cap_{j=1}^{k}S_j$ which are K-semistable, and $\mM^K_{n, \vec{d}, \vec{a}}$ (resp. $M^K_{n, \vec{d}, \vec{a}}$) is the K-moduli stack (resp. K-moduli space) generically parametrizing K-semistable log Fano manifolds of the form  $(\bP^n, \sum_{j=1}^ka_jS_j)$, where $\vec{a}:=(a_1,...,a_k)$ and  $S_j$'s are hypersurfaces of degree $d_j$'s for $1\leq j\leq k$. We have seen that $\mM^K_{n,\vec{d}}$ and $\mM^K_{n, \vec{d},\vec{a}}$ are closely related to each other by Corollary \ref{cor: iff} and Proposition \ref{prop: deg}. We will establish the connection and complete the proof of Theorem \ref{thm: main2} in this subsection.

\begin{proof}[Proof of Theorem \ref{thm: main2}]
By Corollary \ref{cor: iff}, we directly conclude the first statement in Theorem \ref{thm: main2}. We divide the rest proof into two steps. First, we construct the connecting map $\mM^K_{n, \vec{d},\vec{a}}\to \mM^K_{n,\vec{d}}$. Then we show that it descends to a surjective map between moduli spaces. 

\

\textit{Step 1.}
By Proposition \ref{prop: deg}, given $(X, \sum_{j=1}^ka_jB_j)\in \mM^K_{n, \vec{d}, \vec{a}}$, we have that $\cap_{j=1}^k B_j$ is also K-semistable of dimension $n-k$. Thus we can define the morphism
$$\mM^K_{n, \vec{d}, \vec{a}}\to \mM^K_{n, \vec{d}},\ \  [(X, \sum_{j=1}^ka_jB_j)]\mapsto [\cap_{j=1}^k B_j].$$
Actually, the above morphism is only defined on $\bC$-points, however, we could extend it to be a morphism between stacks as follows. For convenience, we only define the morphism for $\mM^K_{n, \vec{d}, \vec{a}}(W)\to \mM^K_{n, \vec{d}}(W)$, where $W$ is a normal scheme\footnote{For arbitrary base, we take the concept of K-flatness introduced by \cite{Kollar19families}. Refer to \cite[Section 2.6]{XZ20b} for the definition of the family.}. Take $[(\mX, \sum_{j=1}^ka_j\mB_j)\to W]\in \mM^K_{n, \vec{d}, \vec{a}}(W)$, i.e.
\begin{enumerate}
\item $\mX\to W$ is a flat family such that $-K_{\mX/W}$ is $\bQ$-Cartier and ample over $W$;
\item $\mB_j$ is a prime divisor on $\mX$ with $\mB_j\sim_{\bQ, W} -r_jK_{\mX/W}$ and $r_j=\frac{d_j}{n+1}$;
\item $(\mX_t, \sum_{j=1}^ka_j\mB_{j,t})\in \mM^K_{n, \vec{d}, \vec{a}}(\bC)$ for every closed point $t\in W$.
\end{enumerate}
By Proposition \ref{prop: deg} and Remark \ref{rem: kss to fano'}, we see that $\mB_{i_1,t}\cap \mB_{i_2,t}\cap...\cap \mB_{i_p,t}$ is a Fano variety for any closed point $t\in W$ and non-repeating subset $\{i_1,...,i_p\}\subset\{1,2,...,k\}$, thus $\mB_{i_1}\cap \mB_{i_2}\cap...\cap \mB_{i_p}\to W$ is flat by \cite[Lemma 10.58]{Kollar23}. By Proposition \ref{prop: deg} (2), we see that the family $\mB_1\cap \mB_2\cap...\cap \mB_k\to W$ could be obtained by taking $k$ times adjunction, and every time we take the adjunction, there is no difference part.  
By Proposition \ref{prop: deg} (3), we see that $\mB_1\cap \mB_2\cap...\cap \mB_k\to W$ is a $\bQ$-Gorenstein family of K-semistable Fano varieties. Thus the connecting morphism could be defined by sending $[(\mX, \sum_{j=1}^ka_j\mB_j)\to W]$ to $[\mB_1\cap \mB_2\cap...\cap \mB_k\to W]$.

\

\textit{Step 2.}
The morphism between stacks naturally descends to a morphism between K-moduli spaces by the universal property of good moduli spaces (e.g. \cite[Theorem 4.16 (vi)]{Alper13}):
$$\phi: M^K_{n, \vec{d}, \vec{a}}\to M^K_{n, \vec{d}}. $$
We aim to show that $\phi$ is surjective. Suppose $[Y]\in M^K_{n, \vec{d}}$  is of the form $S_1\cap...\cap S_k$ deduced by a log smooth pair $(\bP^n, \sum_{j=1}^k S_j)$,  by 
Corollary \ref{cor: iff}, $[(\bP^n, \sum_{j=1}^ka_jS_j)]$ is a preimage. Suppose $[Y]$ is not of the form $S_1\cap...\cap S_k$ deduced by a log smooth pair $(\bP^n, \sum_{j=1}^k S_j)$. Then there is a flat family of K-semistable Fano varieties $\mY\to C$ over a smooth pointed curve $0\in C$ such that $-K_{\mY/C}$ is ample over $C$, $\mY_0\cong Y$, and $\mY_t$ is of the form $S_1\cap...\cap S_k$ deduced by a log smooth pair $(\bP^n, \sum_{j=1}^k S_j)$. Let $\mY^*\to C^*:=C\setminus \{0\}$ be the family obtained via base change under the inclusion $C^*\to C$. We claim that there exist an \'etale morphism $C'^*\to C^*$ and a family 
$$(\mX, \sum_{j=1}^k\mS_j)\to C'^*$$
such that for any $t\in C'^*$, the fiber $(\mX_t, \sum_{j=1}^k\mS_{j, t})$ is isomorphic to some log smooth pair $(\bP^n, \sum_{j=1}^k S_j)$ and $\cap_{j=1}^k \mS_{j, t}\cong \mY'^*_t$, where $\mY'^*\to C'^*$ is the pulling back of $\mY^*\to C^*$ along $C'^*\to C^*$. To see this, we denote by $\bP_j:=|\mO_{\bP^n}(d_j)|$ and $\mE_j\subset \bP^n\times \bP_j$ the universal divisor.
Putting $\mD_j:= \bP_1\times...\bP_{j-1}\times\mE_j\times \bP_{j+1}\times ...\times \bP_k$ and $T:=\bP_1\times \bP_2\times...\times \bP_k$,
we consider the following universal family
$$(\bP^n\times T, \sum_{j=1}^k \mD_j)\to T. $$
Let $Z\subset T$ be the subset of $T$ parametrizing those log smooth fibers $(\bP^n, \sum_{j=1}^{k}\mD_{j,t})$ where $\mD_{j,t}, j=1,...,k,$ are mutually different. We denote the family by 
$$(\bP^n\times Z, \sum_{j=1}^k\mD_j\times_T Z)\to Z.$$
Let $\mU\to U$ be the universal family of complete intersections of $k$ different hypersurfaces in $\bP^n$ of degrees $d_1, d_2,...,d_k$ respectively, then we naturally have the following commutative diagram

\begin{center}
	\begin{tikzcd}[column sep = 2em, row sep = 2em]
	 (\bP^n\times Z, \sum_{j=1}^k\mD_j\times_T Z) \arrow[d,"", swap] \arrow[rr,""]&& \bigcap_{j=1}^k\ \mD_j\times_T Z \arrow[d,"",swap] \arrow[rr,""]& & \mU \arrow[d,"", swap]  \\
	 Z\arrow[rr,"="]&& Z \arrow[rr,""]	&& U
	 \end{tikzcd}
\end{center}
Note that there is a morphism $C^*\to U$ such that $\mY^*\to C^*$ is the pulling back of $\mU\to U$ via the base change $C^*\to U$. Taking base change again, we obtain a family of log smooth pairs
$$\pi: (\bP^n\times Z\times_U C^*, \sum_{j=1}^k\mD_j\times_T Z\times_U C^*)\to Z\times_U C^*. $$ 
Up to shrinking $C$ around $0\in C$, one could find a section $C'^*\to Z\times_U C^*$ such that $C'^*\to C^*$ is an \'etale morphism. Pulling back the family $\pi$ via this section, we get the required family $(\mX, \sum_{j=1}^k\mS_j)\to C'^*$.

Note that $(\mX, \sum_{j=1}^ka_j\mS_j)\to C'^*$ is a family of K-semistable log Fano pairs, by the properness of K-moduli space, we could find a complete family $$(\tilde{\mX}, \sum_{j=1}^ka_j\tilde{\mS}_j)\to C'\ni 0'$$ 
up to a finite base change and the central fiber $(\tilde{\mX}_{0'}, \sum_{j=1}^ka_j\tilde{\mS}_{j,{0'}})$ is also K-semistable. By Proposition \ref{prop: if},  
$\cap_{j=1}^k \tilde{\mS}_{j,{0'}}$ is K-semistable and $[Y]=[\cap_{j=1}^k \tilde{\mS}_{j,{0'}}]$ by the separatedness of K-moduli space (e.g. \cite{BX19}). Thus $[(\tilde{\mX}_{0'}, \sum_{j=1}^ka_j\tilde{\mS}_{j,{0'}})]$ is a preimage of $[Y]$ and $\phi$ is surjective. The proof is complete.
\end{proof}

\begin{example}
Let $n, d$ be two positive integers with $d<n+1$ and put $a:=1-\frac{n+1-d}{nd}$, then there is a natural morphism $\mM^K_{n,d,a}\to \mM^K_{n,d}$ which descends to a surjective morphism $M^K_{n,d,a}\to M^K_{n,d}$. In particular, if $d=1$, then $a=0$. In this case, $\mM^K_{n,d,a}$ only parametrizes $\bP^n$ and $\mM^K_{n, d}$ only parametrizes $\bP^{n-1}$.
\end{example}

\begin{example}\label{exa: cub-hyp}
Take $n=4$ and $\vec{d}=(d_1,d_2)=(3,1)$. Denote $\vec{a}:=(a_1,a_2)=(\frac{8}{9}, \frac{2}{3})$. Then there is a morphism $\mM^K_{4,\vec{d}, \vec{a}}\to \mM^K_{4, \vec{d}}$ sending a K-semistable log Fano pair $(\bP^4, \frac{8}{9}S+\frac{2}{3}H)$ to a K-semistable cubic surface $S\cap H$, where $S$ is a cubic 3-fold. This morphism descends to a surjective morphism $M^K_{4,\vec{d}, \vec{a}}\to M^K_{4, \vec{d}}$. We also note that $M^K_{4, \vec{d}}$ is isomorphic to the GIT moduli space of cubic surfaces.
\end{example}

\begin{example}
Take $n=4$ and $\vec{d}=(d_1,d_2)=(2,2)$. Denote $\vec{a}:=(a_1,a_2)=(\frac{5}{6}, \frac{5}{6})$. Then there is a morphism $\mM^K_{4,\vec{d}, \vec{a}}\to \mM^K_{4, \vec{d}}$ sending a K-semistable log Fano pair $(\bP^4, \frac{5}{6}Q_1+\frac{5}{6}Q_2)$ to a K-semistable del Pezzo surface $Q_1\cap Q_2$ of degree $4$. This morphism descends to a surjective morphism $M^K_{4,\vec{d}, \vec{a}}\to M^K_{4, \vec{d}}$. We also note that $M^K_{4, \vec{d}}$ is isomorphic to the GIT moduli space of smoothable del Pezzo surfaces of degree 4 under the $\PGL(5)$-action with respect to the polarization induced by the Pl\"ucker embedding (e.g. \cite{SS17, OSS16, MM93}).
\end{example}

\begin{remark}
Fix a positive integer $n$ and $k$ positive integers $\vec{d}:=(d_1,...,d_k)$. Let $\vec{a}:=(a_1,...,a_k)$ be the vector defined at the beginning of this section. 
Denote $\mM^{\GIT}_{n,\vec{d},\vec{a}}$ (resp. $M^{\GIT}_{n,\vec{d},\vec{a}}$) as the GIT-moduli stack (resp. GIT-moduli space) parameterizing GIT-semistable (resp. GIT-polystable) elements
$$(S_1,...,S_k)\in |\mO_{\bP^n}(d_1)|\times...\times 
|\mO_{\bP^n}(d_k)|$$ 
under $\Aut(\bP^n)$-action with respect to the linearization $\mO(a_1,...,a_k)$. By  Lemma \ref{lem: proportional cm} and Proposition \ref{prop: GIT=K}, we have 
$$\mM^{\GIT}_{n,\vec{d},\vec{a}}\cong \mM^{K}_{n,\vec{d},c\vec{a}}\ \ ({\rm{resp}}.\  M^{\GIT}_{n,\vec{d},\vec{a}}\cong M^{K}_{n,\vec{d},c\vec{a}})$$
for rational $0<c\ll 1$. As $c$ varies in $(0,1]\cap \bQ$, there is a wall crossing theory with one boundary (\cite{ADL19, Zhou23}). When $c=1$, applying Theorem \ref{thm: main2},  there is a surjective morphism $M^{K}_{n, \vec{d}, \vec{a}}\to M^K_{n, \vec{d}}$. Hence, Theorem \ref{thm: main1} together with Theorem \ref{thm: main2} actually gives a relationship between $M^{\GIT}_{n, \vec{d}, \vec{a}}$ and $M^K_{n, \vec{d}}$. More precisely, we first have a birational map $M^{\GIT}_{n, \vec{d}, \vec{a}}\dashrightarrow M^{K}_{n, \vec{d}, \vec{a}}$ via the wall crossing with one boundary, then we have a projective surjective morphism $M^{K}_{n, \vec{d}, \vec{a}}\to M^{K}_{n, \vec{d}}$.
\end{remark}

\section{An example: Cubic-Hyperplane pairs in $\bP^4$}\label{sec: cubic-hyperplane}

In this section, we consider the pair of the form $(\bP^4, aS+bH)$, where $S$ is a cubic hypersurface, $H$ is a hyperplane, and $a, b\in [0,1]$. Denote by $G:=\PGL(5)$. It is natural to consider the GIT-moduli of $\bP_1\times \bP_2$ under the $G$-action with respect to the linearization $\mO(a, b), \ a>0,b>0$, where  $\bP_1:=|\mO_{\bP^4}(3)|$ and $\bP_2:=|\mO_{\bP^4}(1)|$. Denote by $t:=\frac{b}{a}$ and let $M^{\GIT}(t)$ be the $\GIT$-moduli space with respect to the linearization $\mO(a, b)$ for $t=\frac{b}{a}$. Then with the help of a computer program by {\rm (\cite{GM})} we have the following result.

\begin{proposition}\label{prop: VGIT}
The variation of $M^{\GIT}(t)$ is controlled by the following $\GIT$-walls 
$$t\in \bigg\{\frac{1}{13}, \frac{1}{8},\frac{3}{19},\frac{2}{11},\frac{3}{14},\frac{6}{23}, \frac{1}{3},\frac{3}{7},\frac{6}{13},\frac{11}{23},\frac{1}{2},\frac{9}{17},\frac{4}{7}, \frac{7}{11},\frac{3}{4}\bigg\}. $$
Moreover, $M^{\GIT}(\frac{3}{4})$ is a point, and for $t\in (\frac{3}{4},\infty)$, the space of the GIT modules $M^{\GIT}(t)$ is empty.
\end{proposition}

Recall that Theorem \ref{thm: main1} establishes the relationship between the variation of GIT-stability of $[S, H]\in \bP_1\times \bP_2$ with respect to $t:=\frac{b}{a}$ and the variation of K-stability of the log Fano pair $(\bP^4, \epsilon(aS+bH))$, where $0<\epsilon\ll 1$ is sufficiently small. That is, $(S, H)\in \bP_1\times \bP_2$ is GIT-semistable with respect to the linearization $\mO(a, b)$ if and only if $(\bP^4, \epsilon(aS+bH))$ is K-semistable for $0<\epsilon\ll 1$. On the other hand, by Theorem \ref{thm: main2} (see also Example \ref{exa: cub-hyp}) we know that $(\bP^4,\frac89S+\frac23H)$ is K-semistable if and only if the cubic surface $S\cap H$ is GIT semistable. 

For a log smooth pair $(\bP^4, S+H)$, its K-semistable domain $\Kss(\bP^4, S+H)$ is determined by three vertices $(a, b)=\{(0,0), (\frac{5}{6}, 0), (\frac{8}{9},\frac{2}{3}) \}$ (e.g. Theorem \ref{thm: domain}). We denote this polytope by $P$. The slopes determining the variation of $\GIT$-moduli spaces in Proposition \ref{prop: VGIT} are presented as follows (we only mark part of the slopes to avoid bloating).

\begin{center}
\begin{tikzpicture}[
    scale=5,
    >=latex,
    every node/.style={font=\small}
]

\draw[->,thin] (0,0) -- (1.45,0) node[right] {$a$};
\draw[->,thin] (0,0) -- (0,1.25) node[above] {$b$};

\foreach \x in {1,...,9}
  \draw[thin] (\x/9,0) -- (\x/9,-0.015);

\node[below] at (5/6,0) {$\frac56$};
\node[below] at (8/9,0) {$\frac89$};
\node[below] at (1,0) {$1$};

\foreach \y in {1,...,9}
  \draw[thin] (0,\y/9) -- (-0.015,\y/9);

\node[left] at (0,2/3) {$\frac23$};
\node[left] at (0,1) {$1$};

\draw[thick] (0,0) -- (5/6,0) -- (8/9,2/3) -- cycle;

\fill (8/9,2/3) circle (0.5pt);
\node[above right] at (8/9,2/3)
{$\left(\frac89,\frac23\right)$};

\def\xend{1.35}    
\def\xsolid{0.28}  

\draw[thick] (0,0) -- (\xsolid,\xsolid/13);
\draw[thick,dashed] (\xsolid,\xsolid/13) -- (\xend,\xend/13);
\node[right] at (\xend,\xend/13) {slope $\frac{1}{13}$};

\draw[thick] (0,0) -- (\xsolid,\xsolid/8);
\draw[thick,dashed] (\xsolid,\xsolid/8) -- (\xend,\xend/8);
\node[right] at (\xend,\xend/8+0.02) {slope $\frac{1}{8}$};

\draw[thick] (0,0) -- (\xsolid,\xsolid*4/7);
\draw[thick,dashed] (\xsolid,\xsolid*4/7) -- (\xend,\xend*4/7);
\node[right] at (\xend,\xend*4/7+0.06) {slope $\frac{4}{7}$};

\draw[thick] (0,0) -- (\xsolid,\xsolid*7/11);
\draw[thick,dashed] (\xsolid,\xsolid*7/11) -- (\xend,\xend*7/11);
\node[right] at (\xend,\xend*7/11+0.08) {slope $\frac{7}{11}$};

\draw[thick] (0,0) -- (\xsolid,\xsolid*3/4);
\draw[thick, dashed] (\xsolid,\xsolid*3/4) -- (\xend,\xend*3/4);
\node[right] at (\xend,\xend*3/4+0.08) {slope $\frac{3}{4}$};

\end{tikzpicture}
\end{center}

By \cite{Zhou23b}, there exists a finite rational polytope chamber decomposition of $P$ to control the variation of K-semistability of any $(X, aB_1+bB_2)$, where $(X, B_1+B_2)$ arises from the degeneration of $(\bP^4, S+H)$ (see Definition \ref{def: deg}). Using Theorem \ref{thm: main1}, we can see in the above picture what the chamber decomposition looks like near the original point $(0,0)$. Determining the complete chamber decomposition of $P$ seems a great challenge. However, it is possible to explore the walls on the segment starting from $(0,0)$. For example, we have the following result. 

\begin{proposition}
There are no walls inside the segment $[(0,0), (\frac{5}{6},0)]$.
\end{proposition}

\begin{proof}
Consider the log pair $(\bP^4, c\cdot \frac{5}{6}S)$, where $S$ is a cubic threefold and $c\in (0,1)$. We show that the following statements are equivalent:
\begin{enumerate}
\item $(\bP^4, c\cdot \frac{5}{6}S)$ is K-semistable for $0<c\ll 1$;
\item $S$ is $\GIT$-semistable;
\item $S$ is K-semistable;
\item $(\bP^4, \frac{5}{6}S)$ is K-semistable;
\item $(\bP^4, c\cdot\frac{5}{6}S)$ is K-semistable for any $c\in (0,1)$.
\end{enumerate}
Note that (1) implies (2) by \cite[Theorem 1.4]{ADL19}; (2) implies (3) by \cite{LX19}. To see how (3) implies (4), let $(X, \frac{5}{6}B)$ denote the projective cone over $S$ with respect to the polarization $S|_S$, where $B$ is the infinite section of the projective cone $X$. By Lemma \ref{lem:cone stability}, we see that $(X, \frac{5}{6}B)$ is K-semistable. Note that $(X, \frac{5}{6}B)$ is a degeneration of $(\bP^4, \frac{5}{6}S)$ by a special test configuration, so we get $(\bP^4, \frac{5}{6}S)$ is K-semistable by \cite{BLX22}. (4) implies (5) by the interpolation property of K-stability and (5) implies (1) clearly.

These equivalences already imply that there is no wall for $c\in (0,1)$. Indeed, suppose $c_1\in (0,1)$ is the first wall, then there exists a K-semistable log Fano pair $(\bP^4, c_1\cdot \frac{5}{6}S)$ such that $(\bP^4, c\cdot \frac{5}{6}S)$ is not K-semistable for $c>c_1$, which is a contradiction. 
\end{proof}

We end this article with the following question. 

\begin{question}
Whether there is any wall inside the segment $[(0,0), (\frac{8}{9},\frac{2}{3})]$?
\end{question}

This question will be discussed in a forthcoming work.

\bibliography{reference.bib}

@book {Kollar23,
    AUTHOR = {Koll\'{a}r, J\'{a}nos},
     TITLE = {Families of varieties of general type},
    SERIES = {Cambridge Tracts in Mathematics},
    VOLUME = {231},
      NOTE = {With the collaboration of Klaus Altmann and S\'{a}ndor J. Kov\'{a}cs},
 PUBLISHER = {Cambridge University Press, Cambridge},
      YEAR = {2023},
     PAGES = {xviii+471},
      ISBN = {978-1-009-34610-8},
   MRCLASS = {14J10 (14D20 14E30 14J29)},
  MRNUMBER = {4566297},
MRREVIEWER = {Chenyang Xu},
}

@misc{Kollar19families,
      title={Families of divisors}, 
      author={J\'anos Koll\'ar},
      year={2019},
      eprint={1910.00937},
      archivePrefix={arXiv},
      primaryClass={math.AG}
}

@article {Alper13,
    AUTHOR = {Alper, Jarod},
     TITLE = {Good moduli spaces for {A}rtin stacks},
   JOURNAL = {Ann. Inst. Fourier (Grenoble)},
  FJOURNAL = {Universit\'{e} de Grenoble. Annales de l'Institut Fourier},
    VOLUME = {63},
      YEAR = {2013},
    NUMBER = {6},
     PAGES = {2349--2402},
      ISSN = {0373-0956},
   MRCLASS = {14D23 (14L24 14L30)},
  MRNUMBER = {3237451},
MRREVIEWER = {Arvid Siqveland},
       URL = {http://aif.cedram.org/item?id=AIF_2013__63_6_2349_0},
}

@article {Zhuang24,
    AUTHOR = {Zhuang, Ziquan},
     TITLE = {On boundedness of singularities and minimal log discrepancies
              of {K}oll\'{a}r components, {II}},
   JOURNAL = {Geom. Topol.},
  FJOURNAL = {Geometry \& Topology},
    VOLUME = {28},
      YEAR = {2024},
    NUMBER = {8},
     PAGES = {3909--3934},
      ISSN = {1465-3060},
   MRCLASS = {14B05 (14E30 14J45)},
  MRNUMBER = {4843751},
       DOI = {10.2140/gt.2024.28.3909},
       URL = {https://doi.org/10.2140/gt.2024.28.3909},
}

@misc{Kollar04seifert,
      title={Seifert ${G}_m$-bundles}, 
      author={J\'anos Koll\'ar},
      year={2004},
      eprint={math/0404386},
      archivePrefix={arXiv},
      primaryClass={math.AG}
}

@incollection {MM93,
    AUTHOR = {Mabuchi, Toshiki and Mukai, Shigeru},
     TITLE = {Stability and {E}instein-{K}\"{a}hler metric of a quartic del
              {P}ezzo surface},
 BOOKTITLE = {Einstein metrics and {Y}ang-{M}ills connections ({S}anda,
              1990)},
    SERIES = {Lecture Notes in Pure and Appl. Math.},
    VOLUME = {145},
     PAGES = {133--160},
 PUBLISHER = {Dekker, New York},
      YEAR = {1993},
   MRCLASS = {32J15 (32J27 32L07 53C55 53C56)},
  MRNUMBER = {1215285},
MRREVIEWER = {J. S. Joel},
}

@article {OSS16,
    AUTHOR = {Odaka, Yuji and Spotti, Cristiano and Sun, Song},
     TITLE = {Compact moduli spaces of del {P}ezzo surfaces and
              {K}\"{a}hler-{E}instein metrics},
   JOURNAL = {J. Differential Geom.},
  FJOURNAL = {Journal of Differential Geometry},
    VOLUME = {102},
      YEAR = {2016},
    NUMBER = {1},
     PAGES = {127--172},
      ISSN = {0022-040X},
   MRCLASS = {14J15 (14J45 32Q25 53C25 53C55 58D27)},
  MRNUMBER = {3447088},
MRREVIEWER = {I. Dolgachev},
       URL = {http://projecteuclid.org/euclid.jdg/1452002879},
}

@article {SS17,
    AUTHOR = {Spotti, Cristiano and Sun, Song},
     TITLE = {Explicit {G}romov-{H}ausdorff compactifications of moduli
              spaces of {K}\"{a}hler-{E}instein {F}ano manifolds},
   JOURNAL = {Pure Appl. Math. Q.},
  FJOURNAL = {Pure and Applied Mathematics Quarterly},
    VOLUME = {13},
      YEAR = {2017},
    NUMBER = {3},
     PAGES = {477--515},
      ISSN = {1558-8599},
   MRCLASS = {32Q20 (14J45 14L24 32G13 32J05 32Q25 53C55)},
  MRNUMBER = {3882206},
MRREVIEWER = {P. E. Newstead},
       DOI = {10.4310/pamq.2017.v13.n3.a5},
       URL = {https://doi.org/10.4310/pamq.2017.v13.n3.a5},
}

@article {ADL21,
    AUTHOR = {Ascher, Kenneth and DeVleming, Kristin and Liu, Yuchen},
     TITLE = {K-stability and birational models of moduli of quartic {K}3
              surfaces},
   JOURNAL = {Invent. Math.},
  FJOURNAL = {Inventiones Mathematicae},
    VOLUME = {232},
      YEAR = {2023},
    NUMBER = {2},
     PAGES = {471--552},
      ISSN = {0020-9910},
   MRCLASS = {14J28 (14J10 14J45)},
  MRNUMBER = {4574660},
MRREVIEWER = {Guolei Zhong},
       DOI = {10.1007/s00222-022-01170-5},
       URL = {https://doi.org/10.1007/s00222-022-01170-5},
}

@article {ADL20,
    AUTHOR = {Ascher, Kenneth and DeVleming, Kristin and Liu, Yuchen},
     TITLE = {K-moduli of curves on a quadric surface and {K}3 surfaces},
   JOURNAL = {J. Inst. Math. Jussieu},
  FJOURNAL = {Journal of the Institute of Mathematics of Jussieu. JIMJ.
              Journal de l'Institut de Math\'{e}matiques de Jussieu},
    VOLUME = {22},
      YEAR = {2023},
    NUMBER = {3},
     PAGES = {1251--1291},
      ISSN = {1474-7480},
   MRCLASS = {14J10 (14D23 14J28)},
  MRNUMBER = {4574172},
MRREVIEWER = {Giacomo Mezzedimi},
       DOI = {10.1017/S1474748021000384},
       URL = {https://doi.org/10.1017/S1474748021000384},
}

@misc{Zhou23b,
      title={On the shape of {K}-semistable domain and wall crossing for {K}-stability}, 
      author={Chuyu Zhou},
      year={2023},
      eprint={2302.13503},
      archivePrefix={arXiv},
      primaryClass={math.AG}
}

@article {Zhou23a,
    AUTHOR = {Zhou, Chuyu},
     TITLE = {On {K}-semistable domains---more examples},
   JOURNAL = {Internat. J. Math.},
  FJOURNAL = {International Journal of Mathematics},
    VOLUME = {35},
      YEAR = {2024},
    NUMBER = {2},
     PAGES = {Paper No. 2350103, 30},
      ISSN = {0129-167X},
   MRCLASS = {14J45},
  MRNUMBER = {4712669},
MRREVIEWER = {Jia Jia},
       DOI = {10.1142/s0129167x23501033},
       URL = {https://doi.org/10.1142/s0129167x23501033},
}

@article {LZ23,
    AUTHOR = {Loginov, Konstantin and Zhou, Chuyu},
     TITLE = {Boundedness of log {F}ano pairs with certain {K}-stability},
   JOURNAL = {Int. Math. Res. Not. IMRN},
  FJOURNAL = {International Mathematics Research Notices. IMRN},
      YEAR = {2024},
    NUMBER = {20},
     PAGES = {13281--13294},
      ISSN = {1073-7928},
   MRCLASS = {14J45},
  MRNUMBER = {4811688},
       DOI = {10.1093/imrn/rnae202},
       URL = {https://doi.org/10.1093/imrn/rnae202},
}

@article {BLZ22,
    AUTHOR = {Blum, Harold and Liu, Yuchen and Zhou, Chuyu},
     TITLE = {Optimal destabilization of {K}-unstable {F}ano varieties via
              stability thresholds},
   JOURNAL = {Geom. Topol.},
  FJOURNAL = {Geometry \& Topology},
    VOLUME = {26},
      YEAR = {2022},
    NUMBER = {6},
     PAGES = {2507--2564},
      ISSN = {1465-3060},
   MRCLASS = {14J45 (14J10 32Q20)},
  MRNUMBER = {4521248},
       DOI = {10.2140/gt.2022.26.2507},
       URL = {https://doi.org/10.2140/gt.2022.26.2507},
}

@article {Zhou23,
    AUTHOR = {Zhou, Chuyu},
     TITLE = {On wall-crossing for {K}-stability},
   JOURNAL = {Adv. Math.},
  FJOURNAL = {Advances in Mathematics},
    VOLUME = {413},
      YEAR = {2023},
     PAGES = {Paper No. 108857, 26},
      ISSN = {0001-8708},
   MRCLASS = {14J45 (14D23 14E30)},
  MRNUMBER = {4533746},
       DOI = {10.1016/j.aim.2022.108857},
       URL = {https://doi.org/10.1016/j.aim.2022.108857},
}

@article {LZ22,
    AUTHOR = {Liu, Yuchen and Zhuang, Ziquan},
     TITLE = {On the sharpness of {T}ian's criterion for {K}-stability},
   JOURNAL = {Nagoya Math. J.},
  FJOURNAL = {Nagoya Mathematical Journal},
    VOLUME = {245},
      YEAR = {2022},
     PAGES = {41--73},
      ISSN = {0027-7630},
   MRCLASS = {14J45 (14L24 32Q26)},
  MRNUMBER = {4413362},
       DOI = {10.1017/nmj.2020.28},
       URL = {https://doi.org/10.1017/nmj.2020.28},
}

@article {LXZ22,
    AUTHOR = {Liu, Yuchen and Xu, Chenyang and Zhuang, Ziquan},
     TITLE = {Finite generation for valuations computing stability
              thresholds and applications to {K}-stability},
   JOURNAL = {Ann. of Math. (2)},
  FJOURNAL = {Annals of Mathematics. Second Series},
    VOLUME = {196},
      YEAR = {2022},
    NUMBER = {2},
     PAGES = {507--566},
      ISSN = {0003-486X},
   MRCLASS = {14J45 (14D20 14E99 32Q20)},
  MRNUMBER = {4445441},
       DOI = {10.4007/annals.2022.196.2.2},
       URL = {https://doi.org/10.4007/annals.2022.196.2.2},
}

@article {LWX21,
    AUTHOR = {Li, Chi and Wang, Xiaowei and Xu, Chenyang},
     TITLE = {Algebraicity of the metric tangent cones and equivariant
              {K}-stability},
   JOURNAL = {J. Amer. Math. Soc.},
  FJOURNAL = {Journal of the American Mathematical Society},
    VOLUME = {34},
      YEAR = {2021},
    NUMBER = {4},
     PAGES = {1175--1214},
      ISSN = {0894-0347},
   MRCLASS = {14B07 (14E30 14J17 14J45 53C55)},
  MRNUMBER = {4301561},
       DOI = {10.1090/jams/974},
       URL = {https://doi.org/10.1090/jams/974},
}

@article{LX14,
	Author = {Li, Chi and Xu, Chenyang},
	Doi = {10.4007/annals.2014.180.1.4},
	Fjournal = {Annals of Mathematics. Second Series},
	Issn = {0003-486X},
	Journal = {Ann. of Math. (2)},
	Mrclass = {14J45 (14E30 14J10 14J80)},
	Mrnumber = {3194814},
	Mrreviewer = {Anne-Sophie Kaloghiros},
	Number = {1},
	Pages = {197--232},
	Title = {Special test configuration and {K}-stability of {F}ano varieties},
	Url = {https://doi.org/10.4007/annals.2014.180.1.4},
	Volume = {180},
	Year = {2014},
	Bdsk-Url-1 = {https://doi.org/10.4007/annals.2014.180.1.4},
	Bdsk-Url-2 = {http://dx.doi.org/10.4007/annals.2014.180.1.4}}

@article{Li17,
	Author = {Li, Chi},
	Doi = {10.1215/00127094-2017-0026},
	Fjournal = {Duke Mathematical Journal},
	Issn = {0012-7094},
	Journal = {Duke Math. J.},
	Mrclass = {14B05 (13A18 14J45 52A27 53C25 53C55)},
	Mrnumber = {3715806},
	Mrreviewer = {Ruadha\'{\i} Dervan},
	Number = {16},
	Pages = {3147--3218},
	Title = {K-semistability is equivariant volume minimization},
	Url = {https://doi.org/10.1215/00127094-2017-0026},
	Volume = {166},
	Year = {2017},
	Bdsk-Url-1 = {https://doi.org/10.1215/00127094-2017-0026},
	Bdsk-Url-2 = {http://dx.doi.org/10.1215/00127094-2017-0026}}

@book{Kollar13,
	Author = {Koll\'{a}r, J\'{a}nos},
	Doi = {10.1017/CBO9781139547895},
	Isbn = {978-1-107-03534-8},
	Mrclass = {14E30 (14B05)},
	Mrnumber = {3057950},
	Mrreviewer = {Tommaso De Fernex},
	Note = {With a collaboration of S\'{a}ndor Kov\'{a}cs},
	Pages = {x+370},
	Publisher = {Cambridge University Press, Cambridge},
	Series = {Cambridge Tracts in Mathematics},
	Title = {Singularities of the minimal model program},
	Url = {https://doi.org/10.1017/CBO9781139547895},
	Volume = {200},
	Year = {2013},
	Bdsk-Url-1 = {https://doi.org/10.1017/CBO9781139547895},
	Bdsk-Url-2 = {http://dx.doi.org/10.1017/CBO9781139547895}}

@book{KM98,
	Author = {Koll\'{a}r, J\'{a}nos and Mori, Shigefumi},
	Doi = {10.1017/CBO9780511662560},
	Isbn = {0-521-63277-3},
	Mrclass = {14E30},
	Mrnumber = {1658959},
	Mrreviewer = {Mark Gross},
	Note = {With the collaboration of C. H. Clemens and A. Corti, Translated from the 1998 Japanese original},
	Pages = {viii+254},
	Publisher = {Cambridge University Press, Cambridge},
	Series = {Cambridge Tracts in Mathematics},
	Title = {Birational geometry of algebraic varieties},
	Url = {https://doi.org/10.1017/CBO9780511662560},
	Volume = {134},
	Year = {1998},
	Bdsk-Url-1 = {https://doi.org/10.1017/CBO9780511662560},
	Bdsk-Url-2 = {http://dx.doi.org/10.1017/CBO9780511662560}}

@article{Fuj19,
	Author = {Fujita, Kento},
	Fjournal = {Journal f\"{u}r die Reine und Angewandte Mathematik. [Crelle's Journal]},
	Journal = {J. Reine Angew. Math.},
	Pages = {309--338},
	Title = {A valuative criterion for uniform {K}-stability of {$\Bbb{Q}$}-{F}ano varieties},
	Volume = {751},
	Year = {2019}}

@article {BLX22,
    AUTHOR = {Blum, Harold and Liu, Yuchen and Xu, Chenyang},
     TITLE = {Openness of {K}-semistability for {F}ano varieties},
   JOURNAL = {Duke Math. J.},
  FJOURNAL = {Duke Mathematical Journal},
    VOLUME = {171},
      YEAR = {2022},
    NUMBER = {13},
     PAGES = {2753--2797},
      ISSN = {0012-7094},
   MRCLASS = {14J45 (14E30)},
  MRNUMBER = {4505846},
       DOI = {10.1215/00127094-2022-0054},
       URL = {https://doi.org/10.1215/00127094-2022-0054},
}

@article {ADL19,
    AUTHOR = {Ascher, Kenneth and DeVleming, Kristin and Liu, Yuchen},
     TITLE = {Wall crossing for {K}-moduli spaces of plane curves},
   JOURNAL = {Proc. Lond. Math. Soc. (3)},
  FJOURNAL = {Proceedings of the London Mathematical Society. Third Series},
    VOLUME = {128},
      YEAR = {2024},
    NUMBER = {6},
     PAGES = {Paper No. e12615, 113},
      ISSN = {0024-6115},
   MRCLASS = {14H10 (14H50 14J10 14J45 14L24)},
  MRNUMBER = {4757702},
       DOI = {10.1112/plms.12615},
       URL = {https://doi.org/10.1112/plms.12615},
}

@book{GIT,
	Author = {Mumford, D. and Fogarty, J. and Kirwan, F.},
	Doi = {10.1007/978-3-642-57916-5},
	Edition = {Third},
	Isbn = {3-540-56963-4},
	Mrclass = {14D25 (58E05 58F05)},
	Mrnumber = {1304906},
	Mrreviewer = {Yi Hu},
	Pages = {xiv+292},
	Publisher = {Springer-Verlag, Berlin},
	Series = {Ergebnisse der Mathematik und ihrer Grenzgebiete (2) [Results in Mathematics and Related Areas (2)]},
	Title = {Geometric invariant theory},
	Url = {https://doi.org/10.1007/978-3-642-57916-5},
	Volume = {34},
	Year = {1994},
	Bdsk-Url-1 = {https://doi.org/10.1007/978-3-642-57916-5},
	Bdsk-Url-2 = {http://dx.doi.org/10.1007/978-3-642-57916-5}}

@article{LX19,
	Author = {Liu, Yuchen and Xu, Chenyang},
	Doi = {10.1215/00127094-2019-0006},
	Fjournal = {Duke Mathematical Journal},
	Issn = {0012-7094},
	Journal = {Duke Math. J.},
	Mrclass = {14L24 (14E30 14J30 32Q20)},
	Mrnumber = {3992032},
	Number = {11},
	Pages = {2029--2073},
	Title = {K-stability of cubic threefolds},
	Url = {https://doi.org/10.1215/00127094-2019-0006},
	Volume = {168},
	Year = {2019},
	Bdsk-Url-1 = {https://doi.org/10.1215/00127094-2019-0006},
	Bdsk-Url-2 = {http://dx.doi.org/10.1215/00127094-2019-0006}}

@article{BX19,
	Author = {Blum, Harold and Xu, Chenyang},
	Doi = {10.4007/annals.2019.190.2.4},
	Fjournal = {Annals of Mathematics. Second Series},
	Issn = {0003-486X},
	Journal = {Ann. of Math. (2)},
	Mrclass = {14J45 (14D20 14E30)},
	Mrnumber = {3997130},
	Number = {2},
	Pages = {609--656},
	Title = {Uniqueness of {${K}$}-polystable degenerations of {F}ano varieties},
	Url = {https://doi.org/10.4007/annals.2019.190.2.4},
	Volume = {190},
	Year = {2019},
	Bdsk-Url-1 = {https://doi.org/10.4007/annals.2019.190.2.4},
	Bdsk-Url-2 = {http://dx.doi.org/10.4007/annals.2019.190.2.4}}

@article{KM76,
	Author = {Knudsen, Finn Faye and Mumford, David},
	Doi = {10.7146/math.scand.a-11642},
	Fjournal = {Mathematica Scandinavica},
	Issn = {0025-5521},
	Journal = {Math. Scand.},
	Mrclass = {14H10 (14C05 14F05)},
	Mrnumber = {437541},
	Mrreviewer = {P. E. Newstead},
	Number = {1},
	Pages = {19--55},
	Title = {The projectivity of the moduli space of stable curves. {I}. {P}reliminaries on ``det'' and ``{D}iv''},
	Url = {https://doi.org/10.7146/math.scand.a-11642},
	Volume = {39},
	Year = {1976},
	Bdsk-Url-1 = {https://doi.org/10.7146/math.scand.a-11642},
	Bdsk-Url-2 = {http://dx.doi.org/10.7146/math.scand.a-11642}}

@article{Xu20,
	Author = {Xu, Chenyang},
	Doi = {10.4007/annals.2020.191.3.6},
	Fjournal = {Annals of Mathematics. Second Series},
	Issn = {0003-486X},
	Journal = {Ann. of Math. (2)},
	Mrclass = {14E30 (14J17 14J45)},
	Mrnumber = {4088355},
	Number = {3},
	Pages = {1003--1030},
	Title = {A minimizing valuation is quasi-monomial},
	Url = {https://doi.org/10.4007/annals.2020.191.3.6},
	Volume = {191},
	Year = {2020},
	Bdsk-Url-1 = {https://doi.org/10.4007/annals.2020.191.3.6},
	Bdsk-Url-2 = {http://dx.doi.org/10.4007/annals.2020.191.3.6}}

@article {XZ20b,
    AUTHOR = {Xu, Chenyang and Zhuang, Ziquan},
     TITLE = {On positivity of the {CM} line bundle on {K}-moduli spaces},
   JOURNAL = {Ann. of Math. (2)},
  FJOURNAL = {Annals of Mathematics. Second Series},
    VOLUME = {192},
      YEAR = {2020},
    NUMBER = {3},
     PAGES = {1005--1068},
      ISSN = {0003-486X},
   MRCLASS = {14J45 (14D20 14E30)},
  MRNUMBER = {4172625},
       DOI = {10.4007/annals.2020.192.3.7},
       URL = {https://doi.org/10.4007/annals.2020.192.3.7},
}

@article {ABHLX20,
    AUTHOR = {Alper, Jarod and Blum, Harold and Halpern-Leistner, Daniel and
              Xu, Chenyang},
     TITLE = {Reductivity of the automorphism group of {K}-polystable {F}ano
              varieties},
   JOURNAL = {Invent. Math.},
  FJOURNAL = {Inventiones Mathematicae},
    VOLUME = {222},
      YEAR = {2020},
    NUMBER = {3},
     PAGES = {995--1032},
      ISSN = {0020-9910},
   MRCLASS = {14D23 (14J10 14J45)},
  MRNUMBER = {4169054},
       DOI = {10.1007/s00222-020-00987-2},
       URL = {https://doi.org/10.1007/s00222-020-00987-2},
}

@article {GMGS21,
    AUTHOR = {Gallardo, Patricio and Martinez-Garcia, Jesus and Spotti,
              Cristiano},
     TITLE = {Applications of the moduli continuity method to log {K}-stable
              pairs},
   JOURNAL = {J. Lond. Math. Soc. (2)},
  FJOURNAL = {Journal of the London Mathematical Society. Second Series},
    VOLUME = {103},
      YEAR = {2021},
    NUMBER = {2},
     PAGES = {729--759},
      ISSN = {0024-6107},
   MRCLASS = {32Q20 (14D22 14J10 14J45 14L24)},
  MRNUMBER = {4230917},
MRREVIEWER = {P. E. Newstead},
       DOI = {10.1112/jlms.12390},
       URL = {https://doi.org/10.1112/jlms.12390},
}

@article {Jiang20,
    AUTHOR = {Jiang, Chen},
     TITLE = {Boundedness of {$\Bbb Q$}-{F}ano varieties with degrees and
              alpha-invariants bounded from below},
   JOURNAL = {Ann. Sci. \'{E}c. Norm. Sup\'{e}r. (4)},
  FJOURNAL = {Annales Scientifiques de l'\'{E}cole Normale Sup\'{e}rieure. Quatri\`eme
              S\'{e}rie},
    VOLUME = {53},
      YEAR = {2020},
    NUMBER = {5},
     PAGES = {1235--1248},
      ISSN = {0012-9593},
   MRCLASS = {14J45 (14L24)},
  MRNUMBER = {4174851},
       DOI = {10.24033/asens.244},
       URL = {https://doi.org/10.24033/asens.244},
}

@article {Tha96,
    AUTHOR = {Thaddeus, Michael},
     TITLE = {Geometric invariant theory and flips},
   JOURNAL = {J. Amer. Math. Soc.},
  FJOURNAL = {Journal of the American Mathematical Society},
    VOLUME = {9},
      YEAR = {1996},
    NUMBER = {3},
     PAGES = {691--723},
      ISSN = {0894-0347},
   MRCLASS = {14D25 (14L30)},
  MRNUMBER = {1333296},
MRREVIEWER = {I. Dolgachev},
       DOI = {10.1090/S0894-0347-96-00204-4},
       URL = {https://doi.org/10.1090/S0894-0347-96-00204-4},
}

@article {Zhou21a,
    AUTHOR = {Zhou, Chuyu},
     TITLE = {Log {K}-stability of {GIT}-stable divisors on {F}ano
              varieties},
   JOURNAL = {Math. Res. Lett.},
  FJOURNAL = {Mathematical Research Letters},
    VOLUME = {31},
      YEAR = {2024},
    NUMBER = {4},
     PAGES = {1249--1262},
      ISSN = {1073-2780},
   MRCLASS = {14L24 (14J45)},
  MRNUMBER = {4837550},
       DOI = {10.4310/mrl.241119010936},
       URL = {https://doi.org/10.4310/mrl.241119010936},
}

@article {CP21,
    AUTHOR = {Codogni, Giulio and Patakfalvi, Zsolt},
     TITLE = {Positivity of the {CM} line bundle for families of {K}-stable
              klt {F}ano varieties},
   JOURNAL = {Invent. Math.},
  FJOURNAL = {Inventiones Mathematicae},
    VOLUME = {223},
      YEAR = {2021},
    NUMBER = {3},
     PAGES = {811--894},
      ISSN = {0020-9910},
   MRCLASS = {14J45 (14C20)},
  MRNUMBER = {4213768},
       DOI = {10.1007/s00222-020-00999-y},
       URL = {https://doi.org/10.1007/s00222-020-00999-y},
}

@misc{GM,
  author = {Gallardo, Patricio and Martinez-Garcia, Jesus},
  title = {Variations of GIT quotients package v0.6.13},
  year = {2017},
  howpublished = {\url{https://doi.org/10.15125/BATH-00458.}}
}
\end{document}